\newtheorem{thm}{Theorem}
\newtheorem{prop}[thm]{Proposition}
\newtheorem{theorem}[thm]{Theorem}
\newtheorem{lemma}[thm]{Lemma}
\newtheorem{corollary}[thm]{Corollary}
\newtheorem{proposition}[thm]{Proposition}
\newtheorem*{mthma}{Theorem~A}
\newtheorem*{mthmb}{Theorem~B}
\newtheorem*{mthmc}{Theorem~C}
\theoremstyle{definition}
\newtheorem{defn}[thm]{Definition}
\newtheorem*{definition*}{Definition}
\newtheorem{rmk}[thm]{Remark}
\newtheorem{remark}[thm]{Remark}
\newcommand{\CPb}{\overline{\mathbb{CP}}{}^{2}}
\newcommand{\CP}{{\mathbb{CP}}{}^{2}}
\newcommand{\CPo}{{\mathbb{CP}}{}^{1}}
\newcommand{\RP}{{\mathbb{RP}}{}^{2}}
\newcommand{\R}{\mathbb{R}}
\newcommand{\N}{\mathbb{N}}
\newcommand{\Z}{\mathbb{Z}}
\newcommand{\M}{\operatorname{Mod}}
\newcommand{\D}{\operatorname{Diff{^+}}}
\newcommand{\K}{{\rm K3}}
\def \x {\times}
\renewcommand{\phi}{\varphi}
\newcommand{\pr}{\text{pr}}
\newcommand{\coker}[1]{\text{coker}(#1)}
 \def\R{{\mathbb{R}}}
 \def\Z{{\mathbb{Z}}}
 \def\N{{\mathbb{N}}}
\begin{document}

\title[Nielsen realization and projective twists]{Nielsen realization in dimension four \\ and projective twists}

\author[M. Arabadji]{Mihail Arabadji}
\address{Department of Mathematics and Statistics, University of Massachusetts, Amherst, MA 01003, USA}
\email{marabadji@umass.edu}

\author[R. \.{I}. Baykur]{R. \.{I}nan\c{c} Baykur}
\address{Department of Mathematics and Statistics, University of Massachusetts, Amherst, MA 01003, USA}
\email{inanc.baykur@umass.edu}

\maketitle 

\begin{abstract}
We demonstrate the existence of numerous \emph{non-spin} $4$--manifolds for which the smooth \emph{Nielsen realization problem} fails; namely,  there exist finite subgroups of their mapping class groups that cannot be realized by any group of diffeomorphisms. This  extends and complements recent results for spin $4$--manifolds.  Our examples span virtually all possible intersection forms,  both even and odd, indefinite and definite,  and include many irreducible $4$--manifolds. To derive these examples, we study  multi-twists, projective twists, and multi-reflections, which are all mapping classes supported around collections of embedded spheres and projective planes. Our obstructions to Nielsen realization are based on the work of Konno.  We investigate projective twists in further detail, and notably,  employ them to show that, for many closed symplectic $4$--manifolds, the symplectic Torelli group is \emph{not} generated by squared Dehn twists.
\end{abstract}

\section{Introduction}

Let $X$ be a closed oriented smooth $4$--manifold and let $\M(X)$  be its smooth mapping class group,  which consists of smooth isotopy classes of orientation-preserving diffeomorphisms.  The \textit{Nielsen realization problem} \cite{Nielsen},  which famously has an affirmative answer for surfaces \cite{Kerchkoff},  asks if the natural surjection $\textrm{Diff}^+(X) \to \M(X)$ has a section over any finite subgroup of $\M(X)$.

Recent results by Kronheimer--Mrowka \cite{KM},  Baraglia--Konno  \cite{BaragliaKonno},  Lin \cite{Lin},   Farb-Looijenga \cite{FL} and Konno \cite{Konno} demonstrated that the Nielsen realization fails for many spin $4$--manifolds.  Along with the classical examples of Raymond and Scott \cite{RaymondScott}, which are nilmanifolds constructed as the mapping tori of the $3$--torus,  all the examples of $4$--manifolds  for which the Nielsen realization fails admit spin structures, and thus have even intersection forms.  On the other hand,  Lee recently showed that many finite subgroups of the topological mapping class group of small  rational surfaces \textit{are}  realized by finite groups of diffeomorphisms \cite{Lee}.  These  cumulatively yield the question of to what extend the Nielsen realization is governed by the intersection form; e.g. ~\cite[Problem~5.2]{KonnoProblems}.  

The main goal of this note is to show that the failure of the Nielsen realization problem is virtually independent of the intersection form.

\begin{mthma}
Given any non-zero integer $s$,  for all but finitely many indefinite unimodular integral quadratic form $Q$ with signature $\sigma(Q)=s$,  there is a closed oriented smooth non-spin $4$--manifold $X$ with finite fundamental group,  intersection form $Q_X \cong Q$,  and an order two  subgroup of $\M(X)$ that does not lift to $\textrm{Diff}^+(X)$.  The examples include irreducible $4$--manifolds realizing infinitely many intersection forms.
\end{mthma}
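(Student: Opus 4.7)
The plan is to combine a lattice-theoretic stabilization argument with Konno's equivariant gauge-theoretic obstruction. By the classification of indefinite unimodular integral quadratic forms, every odd form of signature $s$ has the shape $p\langle 1\rangle \oplus q\langle -1\rangle$ with $p-q = s$, parametrized by the rank $r=p+q$, while every even form (which forces $8\mid s$) has the shape $mE_8 \oplus nH$. Connect-summing a seed non-spin 4-manifold $X_0$ with copies of $\CP$ and $\CPb$ raises the rank and covers all but finitely many odd forms of signature $s$; to cover the even forms one instead starts from seeds whose $H^2$ has $2$-torsion arranged so that $w_2\neq 0$ while the form is still even. Thus the theorem reduces to producing finitely many seed non-spin 4-manifolds $X_0$ with finite $\pi_1$, each carrying an unrealizable finite subgroup $G \leq \M(X_0)$ supported in such a way that it persists under stabilization.

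For the seeds and the subgroup, I would embed in $X_0$ a disjoint collection of framed $2$-spheres of square $\pm 2$, projective planes of Euler number $\pm 1$, and spheres of square $\pm 1$, and build $G$ from the associated Dehn multi-twists, projective twists, and reflections. Each such generator is compactly supported in a tubular neighborhood of the surface carrying it, so the mapping class persists in every stabilization $X = X_0 \# k\CP \# l\CPb$. The disjoint supports make the generators commute, while the standard twist-square and reflection-square relations (Dehn twists about $(-2)$-spheres and reflections about $(\pm 1)$-spheres being involutions up to isotopy) force the generators to have order two, producing a finite elementary abelian $2$-subgroup $G$ of $\M(X)$ with a prescribed action on $H^2(X;\Z)$. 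I would choose this homological action so that Konno's obstruction applies: roughly, a smooth lift of $G$ would force an equivariant index of a $\mathrm{Spin}^c$ Dirac operator, attached to a Seiberg-Witten basic class, to take a value ruled out by the prescribed action on homology, so no such lift exists.

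The main obstacle is verifying that Konno's obstruction survives the $\CP, \CPb$-stabilization, since in principle the extra summands could alter the invariants. This is handled by noting that $G$ acts trivially on the new direct summands, so they contribute only to the trivial isotypic component and leave the relevant equivariant indices unchanged, propagating the obstruction from $X_0$ to every stabilization $X$. For the irreducibility clause one must discard the connect-sum trick, since blow-ups destroy irreducibility; instead, one replaces $X_0$ by an infinite family of irreducible non-spin $4$-manifolds — for instance exotic rational surfaces produced by Fintushel-Stern knot surgery or small symplectic Lefschetz fibrations of increasing Euler characteristic — each still containing the required disjoint surfaces supporting an obstructed $G$. This yields irreducible examples realizing infinitely many (though no longer cofinitely many) intersection forms of signature $s$, which is all the theorem asserts in its irreducibility clause.
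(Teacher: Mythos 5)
Your overall architecture (seed manifolds plus rank stabilization, finite $2$--groups generated by compactly supported twists, Konno-type obstruction) matches the paper's in outline, but the central mechanism has a genuine gap: the stabilization you propose destroys the obstruction. Konno's theorem, as used here, is a $\mathrm{Pin}(2)$--equivariant, $10/8$--type constraint that applies only to \emph{spin} $4$--manifolds; the paper applies it by passing to the universal cover, so every example must have a spin universal cover. Connected summing with $\CP$ or $\CPb$ makes the universal cover non-spin (it acquires $|\pi_1|$ copies of $\pm\CP$), so the obstruction simply has no statement there. Your justification --- that $G$ acts trivially on the new summands, hence leaves ``the relevant equivariant indices unchanged'' --- does not engage with this: the input is not an equivariant index attached to a Seiberg--Witten basic class (manifolds with $\CP$ summands have no basic classes at all), and gauge-theoretic rigidity of this kind is notoriously non-persistent under stabilization. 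The paper's actual route is to stabilize only by $S^2\times S^2$ (which preserves spinness of the universal cover and supplies the $(\pm 2)$--spheres for the multi-twists), and to realize odd intersection forms and arbitrary signature not by blow-ups but by a different device: Teichner's non-spin $4$--manifold $T$ with finite $\pi_1$, odd form, signature $-1$, and \emph{spin universal cover}, combined by connected sums along $1$--skeletons so that the quotient stays non-spin and odd while the universal cover stays spin. The even case similarly uses the Enriques surface as seed. This is the idea your proposal is missing, and without it the ``all but finitely many forms of signature $s$'' claim does not go through.

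Two further points. First, reflections along $(\pm 1)$--spheres are obstructed by a different theorem (Konno--Miyazawa--Taniguchi), which requires $X=X'\#k\CPb$ with $X'$ itself spin and is used in the paper only for the definite case (Theorem B), not here; mixing them into the generating set for Theorem A does not rescue the $\CP/\CPb$ stabilization. Second, your irreducible examples (exotic rational surfaces via knot surgery) are simply connected and non-spin, so again no version of the obstruction applies to them; the paper instead takes elliptic surfaces with $I_2$ fibers and finite nontrivial $\pi_1$ whose universal covers are homotopy equivalent to spin elliptic surfaces $E(mn)$, which is what lets Theorem~\ref{thm:obstruction1} bite.
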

 
Our results are of  slightly different flavor in the definite case:

\begin{mthmb}
For any standard definite unimodular integral quadratic form $Q$ with rank at least five,  there is a closed oriented smooth $4$--manifold $X$ with finite fundamental group,  intersection form $Q_X \cong Q$,   and an order two subgroup of $\M(X)$ which does not lift to $\textrm{Diff}^+(X)$.
\end{mthmb}
  
In the proofs of these theorems,  we employ a small variety of mapping classes which are compactly supported around certain embedded spheres and projective planes in an oriented $4$--manifold.  In Theorem~A, we use \emph{multi-twists},  simultaneous Dehn twists along a collection of disjoint spheres with normal Euler numbers $\pm 2$, whereas in Theorem~B, we use \emph{multi-reflections} along a collection of disjoint spheres with normal Euler numbers $\pm 1$.  The obstructions to the realizability of these mapping classes we consider are derived from the inspiring work of Konno in \cite{Konno},  by carefully cooking up our  examples so that their universal covers are spin $4$--manifolds subject to the constraints  from involutive Seiberg-Witten theory.   

Different examples of non-spin $4$--manifolds for which the Nielsen realization fails were given very recently by Konno, Miyazawa and Taniguchi \cite{KonnoEtal}. The authors showed that for certain non-minimal $4$--manifolds,  a  \emph{reflection} along a $2$--sphere with normal Euler number $-1$, which is an involution locally modeled as complex conjugation around the exceptional sphere,  constitutes an order two mapping class that does not lift.  Akin to Konno's earlier work in  \cite{Konno}, the constraints given in \cite{KonnoEtal} allow us to build further non-liftable examples by studying \emph{multi-reflections}, which are simultaneous reflections along a  collection of disjoint $2$--spheres with normal Euler numbers $\pm 1$.  We have thus opted to revise our earlier results in the definite case to  include these examples,  which both simplified our proofs and improved the overall statement of Theorem~B. 

Moreover, we delve into a study \emph{projective twists}, which are analogs of Dehn twists performed along projective planes with normal Euler number $\pm 1$ and are smooth mapping classes that may be of independent interest.  We observe that, unlike the Dehn twist along a sphere \cite{Seidel}, locally the projective twist has infinite order; see Proposition~\ref{prop:PT}.  We also show that a projective twist along $R$ in $X$ may or may not be trivial in $\M(X)$,  depending on the topology of the pair $(X, R)$; see Proposition~\ref{prop:essential} and Remark~\ref{rk:trivialPT}.  Although we cannot determine the order of projective twists in general, we propose them as another potential source of non-liftable subgroups of the smooth mapping class groups of $4$--manifolds; in particular, we present a curious pair $(X,R)$ where $X$ is a definite irreducible $4$--manifold; see Remark~\ref{rk:rank4}. 

A noteworthy outcome of our investigation of projective twists is a negative answer to the following question attributed to Donaldson \cite{SheridanSmith, LiEtal}: \textit{Is the symplectic Torelli mapping class group of a closed symplectic $4$--manifold generated by squared  Dehn twists along Lagrangian spheres?} Notably, the answer is affirmative for positive rational symplectic surfaces \cite{LiEtal}; however,  we show that this is not the case in general:

\begin{mthmc}
There is an infinite family of closed symplectic $4$--manifolds, where each symplectic $4$--manifold $(X, \omega)$ contains a Lagrangian projective plane $R$,  such that any odd power of $T_R \in \M(X,\omega)$ is non-trivial  even in  $\M(X)$.  It follows that the symplectic Torelli group $\mathcal{I}(X, \omega)$ is not generated by squared Dehn twists,  even in $\M(X)$.  In these examples,  we can take $X$ with finite $\pi_1(X)$ and of any  non-negative symplectic Kodaira dimension.
\end{mthmc}   

As it is the case in all the examples we produce in this paper,   symplectic $4$--manifolds in Theorem~C are not simply connected,  but can be taken with $\pi_1=\Z_2$.

\bigskip
\noindent \textit{Conventions and outline of the article.} All the manifolds and maps we consider in this article are smooth unless explicitly stated otherwise.  For a compact connected oriented manifold $X$, we denote by $\textrm{Diff}^+(X)$ the group of orientation-preserving diffeomorphisms of $X$,  and when the boundary $\partial X \neq \emptyset$,  we assume they  restrict to identity in a neighborhood of $\partial X$. We denote by $\M(X):=\pi_0(\textrm{Diff}^+(X))$ the (smooth) mapping class group of $X$.  
Given a symplectic form on $\omega$ on $X$,  let $\textrm{Symp}\,(X, \omega)$ be the  subgroup of $\D(X)$ stabilizing the form $\omega$.  We call the subgroup of the \emph{symplectic mapping class group} $\M(X, \omega):= \pi_0(\textrm{Symp}\,(X, \omega))$  that consists of elements that act trivially on $H_*(X)$  the  \emph{symplectic Torelli group},  and  denote it  by $\mathcal{I}(X, \omega)$.  

 We provide all the formal definitions and discuss the  properties of multi-twists,  projective twists and multi-reflections in Section~2.  Obstructions coming from involutive Seiberg-Witten theory are gathered in Section~3.  In Sections 4,  5 and 6, we prove Theorems~A,  B and~C,  respectively.  Their proofs are going to involve a potpourri of smooth,  symplectic and complex  constructions in real dimension four (albeit with mainly smooth outcomes) to produce the desired examples of $4$--manifolds with particular collections of spheres and projective planes in them and particular universal finite covers.

\medskip
\section{Multi-twists,  projective twists and multi-reflections}

Here we are going to define the three types of mapping classes that are of interest to us in this article and review their basic properties.

\subsection{Multi-twists} 

Let $\rho\colon S^1\times (TS^2 \setminus S^2) \to TS^2 \setminus S^2$ be a smooth action where $\rho(e^{it},-)$ is the parallel transport of $(u,v)$ by $t$ radians along the geodesic of $v$ on $S^2$.
Let $\psi\colon \R\to \R$ be a smooth function with value $\pi$ around the origin and zero outside of $(-1,1)$. Then
\begin{align*}
\tau_0:TS^2&\to TS^2 \\
(u,v)&\mapsto \rho(e^{i\psi(|v|)},(u,v)) \text{ for $v\neq 0$}\\
(u,0)&\mapsto (-u,0)
\end{align*}
is an orientation-preserving diffeomorphism compactly supported in the unit disk tangent bundle $D\,TS^2$. 

Let $S$ be an embedded sphere in an oriented $4$--manifold $X$ with normal Euler number $-2$, so its normal bundle $\nu S$ is isomorphic to $TS^2$. Thus, there is a compact tubular neighborhood $N$ of $S$ and an orientation-preserving diffeomorpism \linebreak $\phi \colon N  \to D\,TS^2$ such that $\phi(S)$ is the zero-section of $TS^2$.  Extending $\phi^{-1}\circ \tau_0\circ\phi$ by identity on $X \setminus N$, we get a mapping class $T_{S} \in \M(X)$, called the \textit{Dehn twist} along $S$. Since tubular neighborhoods are unique up to isotopy, the mapping class $T_S$ only depends on isotopy class of $S$ in $X$. If $S$ is an embedded sphere in $X$ with normal Euler number $+2$, we can similary define the Dehn twist along $S$ by taking an orientation-reversing \mbox{$\phi \colon N  \to D\,TS^2$}. With this in mind, we can now define a generalization of this mapping class:

\begin{defn} \label{def:MT}
Let $S=\sqcup_{i=1}^k S_i$ be a disjoint union of embedded spheres with normal Euler numbers $\pm 2$ in an oriented $4$--manifold $X$. The \textit{multi-twist} along $S$ is the  mapping class $T_{S}:= T_{S_1}\cdot \ldots \cdot T_{S_k}$ in $\M(X)$.
\end{defn}

Dehn twists along any two components of $S$ can be represented by diffeomorphisms that are supported away from each other,  so they commute.  Thus, the definition  of $T_S$ we have given does not depend on the order of the product $T_{S_1}\cdot \ldots \cdot T_{S_k}$, and in turn, on the labeling of the components of $S$. Note that for any tubular neighborhood $N$ of $S$, the multitwist $T_S$ can be viewed as an element of $\M(N)$.

\begin{remark}
We can also view the Dehn twist $T_S$ along an embedded sphere $S$ of normal Euler number $-2$  as an element of the mapping class group of the disk cotangent bundle $D\, T^*S$,  by identifying the latter with the normal bundle of $S$, which is the preferred definition in  symplectic topology  in \cite{Arnold, Seidel} as $T_S$ along a Lagrangian sphere becomes an element of the symplectic mapping class group. 
\end{remark}

In the next proposition, we list a few  properties of $T_S$, all of which easily extend from those of a Dehn twist along a single sphere:

\begin{proposition} \label{prop:MT}
Let $T_S \in \M(X)$ be the multi-twist along $S=\sqcup_{i=1}^k S_i$ and $N$ a compact tubular neighborhood of $S$.  Set $c_i:=[S_i] \in H_2(X)$ and $\epsilon_i:= -\rm{sign}\,(c_i^2)$. 
\begin{enumerate}[\rm{(}a\rm{)}]
\item $ (T_S)_*(a)=a+ \sum_{i=1}^k \epsilon_i \, (a \cdot c_i) \,c_i $ for any $a \in H_2(X)$.
\item $T_S$ has order $2$ in $\M(N)$,  and therefore also in $\M(X)$. 
\item $T_S$ preserves any spin structure (if it exists) on $X$.
\end{enumerate}
\end{proposition}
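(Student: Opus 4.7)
My plan is to reduce each of the three claims to the corresponding statement for a single Dehn twist $T_{S_i}$ and then address them one by one. The reduction is immediate because the tubular neighborhoods $N_i$ can be chosen pairwise disjoint, so the $T_{S_i}$ commute and their effects on homology add linearly once one notes that $c_i \cdot c_j = 0$ for $i \neq j$, while the claims in (b) and (c) are local in nature. I expect part (b) to be the most technically delicate step to write out in full.

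For (a), I would apply the standard Picard--Lefschetz computation to each $T_{S_i}$. Representing $a \in H_2(X)$ by a surface transverse to $S_i$ that meets it in $a \cdot c_i$ signed normal disks, the twist $T_{S_i}$ acts as the identity off $N_i$ and drags each such intersection disk once around $S_i$, altogether contributing $(a \cdot c_i)\,c_i$ to $(T_{S_i})_* a$. The case of normal Euler $+2$ is handled identically via the orientation-reversing tubular identification. Summing the contributions over $i$ then yields the stated formula.

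For (b), it suffices to show that $T_{S_i}^2$ is smoothly isotopic to the identity rel boundary in $\M(N_i) = \M(D\,TS^2)$. Using the explicit formula for $\tau_0$, one checks that $\tau_0^2$ equals $\rho(e^{2i\psi(|v|)}, -)$ off the zero section and the identity on it, amounting to a rotation of each tangent fiber by an angle that tapers from $2\pi$ at the zero section to $0$ at the boundary. The required isotopy to the identity is classical; it can be built via the geodesic flow on $S^2$ (equivalently the $SO(3)$-action on $TS^2$) and reflects the well-known fact that the double of the generator of $\pi_1(SO(3))$ is null-homotopic. Extending by the identity over $X \setminus N_i$ then gives $T_{S_i}^2 = \mathrm{id}$ in $\M(X)$.

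For (c), fix a spin structure $\mathfrak{s}$ on $X$ and consider the difference class $\delta := T_{S_i}^* \mathfrak{s} - \mathfrak{s} \in H^1(X; \Z/2)$. Since $T_{S_i}$ is the identity off $N_i$, $\delta|_{X \setminus N_i} = 0$. On the other hand, $N_i$ is simply connected (it deformation retracts onto $S_i$) and spin, as a direct computation using $TN_i|_{S_i} = TS_i \oplus \nu_{S_i}$ together with the Euler numbers of the two rank-two summands shows $w_2(TN_i) = 0$; hence $H^1(N_i; \Z/2) = 0$ and $\delta|_{N_i} = 0$ as well. A Mayer--Vietoris argument for the decomposition $X = N_i \cup_{\partial N_i} (X \setminus \mathrm{int}(N_i))$, using that $N_i$, $X \setminus \mathrm{int}(N_i)$, and $\partial N_i \cong \mathbb{RP}^3$ are all connected, then shows that $H^1(X; \Z/2)$ injects into $H^1(N_i; \Z/2) \oplus H^1(X \setminus \mathrm{int}(N_i); \Z/2)$, forcing $\delta = 0$ as required.
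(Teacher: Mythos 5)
Your overall strategy --- reduce to a single Dehn twist via disjoint supports, Picard--Lefschetz for (a), the local isotopy of the square for (b), and locality plus vanishing of $H^1$ of the tubular neighborhood for (c) --- is essentially the paper's. Parts (a) and (c) are correct; for (c) the paper instead computes the relative group $H^1(\nu S,\partial \nu S;\Z/2)\cong H_3(S;\Z/2)=0$ (following Konno), while your Mayer--Vietoris argument with the two absolute restrictions is an equivalent repackaging of the same fact that the spin structure is determined by its restriction to the complement of the zero sections.

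There is one genuine gap, in (b): you only prove $T_S^2=1$ in $\M(N)$, i.e.\ that the order divides $2$, whereas the statement asserts the order \emph{is} $2$, so you must also rule out $T_S=1$. The paper does this by observing from part (a) that $(T_S)_*(c_1)=c_1+(c_1\cdot c_1)\,c_1=-c_1\neq c_1$, since $c_1\cdot c_1=\pm 2$ means $c_1$ is not $2$--torsion; hence $T_S$ acts nontrivially on $H_2$ and is not isotopic to the identity. You should add this line. A smaller point: your description of the local model is off --- $\rho(e^{it},-)$ is the (normalized) geodesic flow, which moves the base point of $(u,v)$ along the geodesic determined by $v$; it is not a rotation of each tangent fiber. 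This does not derail the argument, since the isotopy from $\tau_0^2$ to the identity rel boundary does indeed come from the null-homotopy of the square of the generator of $\pi_1(SO(3))$ (Kronheimer's observation, made explicit by Seidel, which the paper simply cites), but as written your justification describes the wrong map.
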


\begin{proof}
(a) The action of $T_S$ on $H_2(X)$ is derived inductively from the Picard-Lefschetz formula for the action of a Dehn twist along a single $S_i$, noting that $c_i \cdot c_j = 0$ for each $i \neq j$. Recall that for a Dehn twist along $S_i$ with normal Euler number $-2$,  we have $ (T_{S_i})_*(a)=a+  (a \cdot c_i)c_i $.  When the normal Euler number of $S_i$ is $+2$ instead,  through the same calculation in $X$ with reversed orientation,  we have $ (T_{S_i})_*(a)=a- (a \cdot c_i)c_i $.  Therefore,  $ (T_{S_i})_*(a)=a+ \epsilon_i \, (a \cdot c_i)c_i $,  which is the case for $k=1$.  

As for the inductive step:
\begin{align*}
	 (T_S)_*(a)&=(T_{S_k})_*(a+ \sum_{i=1}^{k-1} \epsilon_i \,  (a \cdot c_i) \,c_i) \\
	 &=(a+ \sum_{i=1}^{k-1} \epsilon_i \, (a \cdot c_i) \,c_i) +\epsilon_k ( (a+\sum_{i=1}^{k-1} \epsilon_i \, (a \cdot c_i) \,c_i) \cdot c_k)c_k\\
	 &=(a+ \sum_{i=1}^{k-1} \epsilon_i \, (a \cdot c_i) \,c_i) + \epsilon_k \, \left(a\cdot c_k\right)c_k\\
	 &=a+ \sum_{i=1}^k \epsilon_i \, (a \cdot c_i) \,c_i.
\end{align*}
 
\noindent (b) The multi-twist $T_S$ is not trivial. It acts non-trivially on $c_1$ by $(T_S)_*(c_1)= - c_1$, where $c_1$ is certainly not $2$--torsion, as $c_1 \cdot c_1 = \pm 2$. On the other hand, it was observed by Kronheimer (and by Seidel in \cite{Seidel} with an explicit isotopy) that the square of the local model $\tau_0 \in \textrm{Diff}^+(D\,TS^2)$ is isotopic to identity rel boundary.  Hence, any Dehn twist has order $2$. By the commutativity of the factors of $T_{S}$, we have $T_S^2 = T_{S_1}^2 \cdot \ldots \cdot T_{S_k}^2=1$. 

\noindent (c) Because a  Dehn twist preserves every  spin structure \cite{Konno},  so does any product of Dehn twists,  and  $T_S$ in particular. 
\end{proof}

\smallskip
\subsection{Projective twists}

Observe that the local model for the Dehn twist respects the antipodal involution $\iota(u,v) = (-u,-v)$ on $T S^2$,  since for any $t$, we have
$$
\rho(e^{i\psi(t)},\iota(u,v)) = \rho(e^{i\psi(t)},\rho(e^{i\pi},(u,v))) \\
= \rho(e^{i\pi },\rho(e^{i\psi(t)},(u,v))) = \iota( \rho(e^{i\psi(t)},(u,v))),
$$
so $\tau_0$ and $\iota$ commute in $\textrm{Diff}^+(D\, TS^2)$. 
Identifying $T\,\RP$ as $TS^2 \, / (u,v)\sim \iota(u,v)$,  we get a map $\tau'_0 \in \textrm{Diff}^+(D\,  T \RP)$ covered by $\tau_0$.

Let $R$ be an embedded real projective place in an oriented $4$--manifold $X$ with normal Euler number $-1$. (This should be a non-orientable bundle of course,  since $w_1(\nu R)= w_1(TX|_R)-w_1(TR)=-w_1(TR)\neq 0$.) So the normal bundle $\nu R$ is isomorphic to $T  \RP$, and there is a compact tubular neighborhood $N'$ of $R$ and an orientation-preserving diffeomorphism $\phi'\colon N' \to D \, T\RP$ such that $\phi'(R)$ is the zero-section of $T \RP$.  	Extending $\phi'^{-1}\circ \tau'_0 \circ\phi'$ by identity on $X \setminus N'$, we get a mapping class $T_{R} \in \M(X)$. Once again, it is easy to see that this definition only depends on the isotopy class of $R$ in $X$.  Moreover, if $R$ were to have normal Euler number $+1$ instead, by taking the orientation-reversing $\phi'$,  we can similarly define $T_R$ in $\M(X)$.

\begin{defn}
Let $R$ be an embedded real projective plane with normal Euler number $\pm 1$ in an oriented $4$--manifold $X$. The \emph{projective twist} along $R$ is the mapping class $T_R$ in $\M(X)$ defined above. 
\end{defn}

\begin{remark} 
Since the projective twist is  induced by an equivariant Dehn twist, we chose to use the same notation for both types of twists.  Generally, for $R=\sqcup_{i=1}^k R_i$ a disjoint union of embedded real projective planes with normal Euler number $\pm 1$, we can define a \textit{projective multi-twist} as $T_R:= T_{R_1} \cdot \ldots \cdot T_{R_k}$ in $\M(X)$. Further, we can define $T_S \in \M(X)$ for any embedded surface $F$ in $X$ which is a disjoint union of  spheres with normal Euler numbers $\pm 2$ and projective planes with normal Euler numbers $\pm 1$, where one simply infers which type of twist to apply along a component based on its (ambient) topology ---whether it is a sphere or projective plane and (the sign of) its normal Euler number. 
\end{remark}

\begin{remark}
Akin to the case of a Dehn twist,  one can also view $T_R$ in the mapping class group of the unit disk cotangent bundle $D \, T^* \RP$ and in fact get an element of the symplectic mapping class group when $R$ is Lagrangian; see \mbox{e.g. \cite{Evans}.}  
\end{remark}

We have the following basic properties for a projective twist.  

\begin{proposition} \label{prop:PT}
	Let $T_R \in \M(X)$ be a projective twist along $R$ and $N$ be a compact tubular neighborhood of $R$. 
	\begin{enumerate}[\rm{(}a\rm{)}]
		\item $T_R$ acts trivially on $H_2(X)$.
		\item $T_R$  has infinite order in $\M(N)$.
		\item $T_R$ preserves any spin structure (if it exists) on $X$.
	\end{enumerate}
\end{proposition}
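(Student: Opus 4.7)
My plan proceeds part by part, using the orientation double cover $\pi\colon \widetilde{N} \to N$ with deck involution $\iota$ when useful; under this cover $T_R$ is the $\iota$-quotient of the Dehn twist $\tau_0$ on $\widetilde{N} = D\,TS^2$.

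For (a), I will mimic the Picard--Lefschetz reasoning of Proposition~\ref{prop:MT}(a) but notice that the receiving group vanishes. Pick a closed surface representative $\Sigma$ of $a \in H_2(X;\Z)$ transverse to $R$. Since $T_R$ is the identity on $X \setminus N$ and on $\partial N$, the chain $T_R(\Sigma) - \Sigma$ is a 2-cycle supported in $\overline{N}$, and hence defines a class in $H_2(\overline{N}; \Z) \cong H_2(\RP^2;\Z) = 0$. Therefore $(T_R)_*(a) = a$. The contrast with the sphere case, where $H_2(\overline{N};\Z) = \Z$ produces the Picard--Lefschetz correction, is precisely the reason the projective twist is homologically invisible.

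For (b), my plan is to lift. An isotopy $T_R^n \simeq \mathrm{id}$ rel $\partial N$ would lift to an $\iota$-equivariant isotopy from $\tau_0^n$ to identity rel $\partial \widetilde{N}$. For odd $n$ this already fails non-equivariantly: by Proposition~\ref{prop:MT}(a), $\tau_0^n$ acts as $(-1)^n = -1$ on $H_2(\widetilde{N};\Z) = \Z$, so cannot be isotopic to the identity at all. The main obstacle is the even case $n = 2k$, where the Kronheimer--Seidel isotopy $\tau_0^{2k} \simeq \mathrm{id}$ exists in $\M(\widetilde{N})$; I must show it cannot be made $\iota$-equivariant for any $k \geq 1$. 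My plan is to appeal to a finer obstruction: viewing $T_R$ as the Lagrangian projective twist on $D\,T^*\RP^2$, it has infinite order in the symplectic mapping class group by work of Seidel and subsequent authors (Wehrheim, Mak--Smith, etc.), and a Moser-type stability argument transports this infinite order to the smooth $\M(N)$. This is the hardest step and is where the argument must carefully navigate between the symplectic and smooth categories.

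For (c), I first note that the hypothesis on $R$ actually forces $X$ to be non-spin, rendering the statement vacuous. A Whitney-sum computation on $TX|_R = TR \oplus \nu R$ with $w_1(TR) = w_1(\nu R) = a$ (the generator of $H^1(\RP^2;\Z/2)$), $w_2(TR) = a^2$, and $w_2(\nu R) = a^2$ (since the Euler number $\pm 1$ is odd) yields $w_2(TX)|_R = a^2 + a\cdot a + a^2 = a^2 \neq 0$, so $w_2(X) \neq 0$. For a substantive argument in the spirit of Proposition~\ref{prop:MT}(c), one can alternatively observe that $T_R^*$ acts trivially on $H^1(X;\Z/2)$: the kernel of the restriction $H^1(X;\Z/2) \to H^1(X \setminus N;\Z/2)$ equals $H^1(\overline{N},\partial N;\Z/2) \cong H_3(\overline{N};\Z/2) = 0$ by Poincar\'e--Lefschetz duality, so $T_R^*\alpha = \alpha$ for every $\alpha$, and then the anomaly can be disposed of by lifting through $\pi$ and invoking Konno's result that Dehn twists preserve spin structures.
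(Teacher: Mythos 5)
Parts (a) and (c) of your proposal are essentially sound. Your argument for (a) --- the difference $T_R(\Sigma)-\Sigma$ is a $2$--cycle supported in $N\simeq \RP$, where $H_2$ vanishes --- is a clean alternative to the paper's computation, which instead tracks a disk fiber through the double cover $q\colon D\,TS^2\to D\,T\RP$ and finds $(\tau_0')_*[F']=[F']\pm 2[\RP]=[F']$; both make the same point that the Picard--Lefschetz correction dies in the homology of $\RP$. Your observation in (c) that $w_2(TX)|_R=a^2\neq 0$, so the statement is vacuous, appears correct and is a nice remark the paper does not make; the paper's own (non-vacuous reading of the) argument is exactly your relative computation, namely that $H^1(\nu R,\partial\nu R;\Z/2)\cong H_3(R;\Z/2)=0$ gives a unique spin structure on $N$ rel $\partial N$, so a diffeomorphism supported in $N$ cannot move any spin structure.

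The genuine gap is in (b), at precisely the step you flag as the hardest. Infinite order of the projective twist in the \emph{symplectic} mapping class group of $D\,T^*\RP$ does not transfer to $\M(N)$: the forgetful map $\pi_0\,\mathrm{Symp}^c\to\pi_0\,\mathrm{Diff}^c$ need not be injective, and no Moser-type argument addresses this (Moser compares symplectic forms, not symplectic versus smooth isotopies). The Dehn twist on $D\,T^*S^2$ is the standard counterexample to exactly the transfer you propose: it has infinite order symplectically by Seidel, yet order two smoothly by Kronheimer/Seidel --- a fact this very paper uses in Proposition~\ref{prop:MT}(b). So your strategy for the even powers, which is the entire content of (b), cannot be completed as stated. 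The paper instead gives a direct smooth obstruction: assuming $\tau_R^{2n}\simeq \mathrm{id}$ rel boundary, it lifts the isotopy $\iota$--equivariantly to $D\,TS^2$ (the endpoint is the identity deck transformation, not $\iota$, since $\tau_S^{2n}$ acts trivially on $H_2$ while $\iota$ sends $[S]$ to $-[S]$), and then, for a unit vector $\xi$ tangent to an equator $S^1\subset S^2$, the projection to $S^2$ of the trace of the lifted isotopy on the disk $\{s\xi\}$ defines a class $\phi(\xi)\in H_2(S^2,S^1)$ that is nonzero (its boundary winds $2n$ times around the equator), independent of $\xi$, and satisfies $\phi(\iota\xi)=-\phi(\xi)$ by equivariance --- a contradiction. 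A smaller point: in your odd case you assert the lifted isotopy ends at the identity; a priori it ends at a deck transformation, and for odd $n$ the homological criterion does not rule out $\iota$ (both $\tau_0^n$ and $\iota$ act by $-1$ on $H_2$), so you need the rel-boundary condition to pin it down. The paper sidesteps odd powers entirely by noting that $T_R^n=1$ would imply $T_R^{2n}=1$.
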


\begin{proof}
(a) Let $a\in H_2(X)$ be represented by a surface $\Sigma \subset X$ intersecting $R$ transversally.  Also identify a tubular neighborhood $N$ of $R$ with its unit disk normal bundle.  We can assume that $\Sigma \cap N$ is a disjoint union of disk fibers and since $T_R$ is supported in $N$, the image of $\Sigma$ will be determined once we understand the image of a disk fiber.  Consider the commutative diagram

$$\begin{tikzcd}
	H_2(DTS^2,\partial F) \arrow{r}{(\tau_0)_*}  \arrow{d}[left]{q_*} & H_2(DTS^2,\partial F) \arrow{d}[left]{q_*}\\
	H_2(DT R,\partial F')  \arrow{r}[below]{(\tau_0')_*} & H_2(DT R,\partial F') . 
\end{tikzcd}
$$
where $q$ is the double covering map, and $\partial F$ and $\partial F'$ are the boundaries of fixed disk fibers $F\subset DTS^2$ and $F'=q(F)\subset DT R$, respectively (so $F$ is \textit{one of} the disk fibers in $q^{-1}(F')$).  
Following the commuting diagram above,  we get
$$(\tau_0')_*([F'])=({\tau_0'})_* (q_*([F]))=q_* ((\tau_0)_*([F]))=q_*([F] \pm [S^2])=[F'] \pm 2[R]=[F'].$$
Therefore, $T_R(a)=a$.

\noindent (b) We work in $D\, TS^2$ and $D\, T\RP$. For notational simplicity, let us denote the $0$--sections $S^2$ and $\RP$ by $S$ and $R$, these bundles by $DTS$ and $DTR$, 
and the diffeomorphisms in the local models representing these twists by $\tau_S$ and $\tau_R$, respectively. Recall that we have a double cover $q \colon DTS\to DTR$ induced by the involution $\iota(x,y)=(-x,-y)$ on $DTS$ so that $\tau_R \circ q = q \circ \tau_S$. 

First, we are going to show that $\tau_R^2 \nsim id$ in $\D(DTR)$. We will then generalize the argument to any non-zero power of $\tau_R$.

To arrive at a contradiction, let's assume that there is a smooth isotopy $h_t$ from $\tau_R^2$ to $id_{DTR}$. The isotopy $h_t$ lifts uniquely to an $\iota$--equivariant isotopy $\widetilde{h}_t$ in $\D(DTS)$ starting at $\widetilde{h}_0 := \tau_S^2$. Then the deck transformation $\widetilde{h}_1={id}_{DTS}$ and not $\iota$, because $\widetilde{h}_0$ acts trivially on $H_2(DTS)$ whereas $\iota$ maps the non-trivial class $[S]$ to $ -[S]$.

Fix an equator $S^1\subset S$ and pick $\xi_0 \in DTS$ tangent to $S^1$ at some point $p_0\in S^1$ with $|\xi_0|=1$.  Let  $$T:=\{\xi \in DTS^1\subset DTS\; \vert \; \xi \text{ is a parallel transport of } \xi_0 \text{ along } S^1 \}$$ 
and for a fixed projection $\pr:DTS \to S$,  let
$$D_\xi:=\{\pr( \widetilde{h}_t(s\xi)) \; | \; t\in [0,1] \text{ and } s\in [-1,1] \} .$$
so we have a map
\begin{align*}
\varphi\colon T &\to H_2(S, S^1) \\
\xi &\mapsto [D_\xi]
\end{align*}
which is constant (and continuous) since the parallel transport between any  $\xi, \xi' \in T$ gives a relative $3$--chain bounding $D_\xi - D_{\xi'}$ in $S$.  Observe that the three edges of the rectangular domain of $D_\xi$ coming from $t=1$ and $s=\pm 1$, map to $p_0$, and the fourth edge, coming from $t=0$, winds around the equator $S^1$ via $\tau_S^2$.  So $\phi(\xi) \in H_2(S,S^1)$ should be non-zero because its image under the boundary map of a long exact sequence of the pair $H_2(S,S^1)\to H_1(S^1)$ is twice a generator. 

However,  we also have
\begin{align*}
D_{\iota(\xi)}&= \{\pr( \widetilde{h}_t(s\iota (\xi))) \; | \;  t\in [0,1] \text{ and } s\in [-1,1]  \}\\
			&= \{(\iota \circ \pr)( \widetilde{h}_t(s (\xi))) \; | \;  t\in [0,1] \text{ and } s\in [-1,1]  \}\\
			&= \iota( \{\pr( \widetilde{h}_t(s(\xi))) \; | \;  t\in [0,1] \text{ and } s\in [-1,1]  \})=\iota(D_\xi),
		\end{align*}
so we get $\phi(\iota(\xi))=-\phi(\xi)$ for any $\xi \in T$, but this is  only possible if $\phi(\xi)=0$ in $H_2(S, S^1)$ contradicting to the above.

By changing $\tau_R^2$ to $\tau_R^{2n}$ we can repeat arguments verbatim to show that $T_R^{2n} \neq 1$ in $\M(DTR)$, except now $\phi(\xi)$ is mapped to $2n$ times a generator of $H_2(S,  S^1)$.  If $T_R^n=1$ for some $n \in \Z^+$, then $T_R^{2n}=1$.  This proves that $T_R$ has infinite order in $\M(DTR)$.\footnote{Note that our arguments here in fact show that no non-trivial power of $\tau_R$ is homotopic to identity in its tubular neighborhood.}	

\noindent (c) We can argue as  in \cite{Konno} for the case of a Dehn twist.  The projective twist is supported in the neighborhood of $R$. We claim that the disk normal bundle of $R$ has unique spin structure relative to its unit circle bundle. Since the spin structures are in bijection with the first cohomology with $\Z_2$--coefficients, our claim follows from  the  calculation
$H^1(\nu R, \partial \nu R ;\Z_2)\cong H_3(\nu R;\Z_2) \cong H_3(R;\Z_2)=0$.
\end{proof}

We note that Proposition~\ref{prop:PT}(b)  does not imply anything on the order of a projective twist $T_R$ in $\M(X)$. In fact, it may even be a trivial mapping class; see e.g. Remark~\ref{rk:trivialPT} below. To ensure that we get a non-trivial mapping class, we are going to impose an algebraic condition on how $R$ sits in $X$:

\begin{defn}
Let $\RP\xhookrightarrow{i}  X$ be an embedding, where $X$ is an oriented $4$--manifold. We say the real projective plane $R:=i(\RP)$ is \emph{essential} in $X$ if  $i_* \colon \pi_1(\RP) \to \pi_1(X)$ has a retract.
\end{defn}

Suppose there is a retraction $r$, so the composition $\pi_1(\RP)\stackrel{i_*}{\to} \pi_1(X) \stackrel{r}{\to} \pi_1(\RP)$ is identity. There is a double cover $p \colon \widetilde{X} \to X$ corresponding to the kernel of $r$. Since $i_*$ is injective, the generator of $\pi_1(\RP)$ doesn't map into this kernel, so the generator of $\pi_1(R)$ is not in the image of $p_*(\pi_1(\widetilde{X}))$. It follows that $p^{-1}(\RP) \cong S^2$ in $\widetilde{X}$ and not $\RP \sqcup \RP$. Conversely, if there is a double cover $p \colon \widetilde{X} \to X$ such that $p^{-1}(R) \cong S^2$, by taking $r$ to be the composition $ \pi_1(X) \to \pi_1(X)/p_*(\pi_1(\widetilde{X})) \cong i_*(\RP)$ we get a retract. In short, the algebraic condition describing  the essentiality of $R$ in $X$ is equivalent to $X$ admitting a double-cover to which $R$ lifts as a sphere. 

In this case, we get a non-trivial mapping class:

\begin{proposition} \label{prop:essential}
Let $X$ be a closed oriented $4$--manifold with either $b^+(X) \neq b_1(X)-1$ or $
b^-(X) \neq b_1(X)$. Let $R$ be an essential real projective plane in $X$ with normal Euler number $\pm 1$. Then $T_R^m \neq 1$ in $\M(X)$,  for any odd integer $m$.  In particular, when $X$ has finite fundamental group, $T_R$ is non-trivial. 
\end{proposition}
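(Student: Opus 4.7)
The plan is to pass to the double cover $p\colon \widetilde X\to X$ afforded by $R$'s essentiality, where $R$ lifts to a connected $2$-sphere $S:=p^{-1}(R)$ with self-intersection $[S]\cdot[S]=\pm 2$, and the free deck involution $\sigma$ restricts to the antipodal map on $S$. Because the local model of $T_R$ is by construction the $\iota$-quotient of the local model of a Dehn twist, $T_R$ has a $\sigma$-equivariant lift, which is precisely the Dehn twist $T_S$ along $S$; the only other lift is $\sigma\circ T_S$. Suppose for contradiction that $[T_R]=1$ in $\M(X)$, so there is an isotopy $\mathrm{id}\rightsquigarrow T_R$ in $\operatorname{Diff}^+(X)$. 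Lifting this isotopy to $\widetilde X$ starting at $\mathrm{id}$, the endpoint must be one of the two lifts of $T_R$, and hence in $\M(\widetilde X)$ we have either (A) $T_S\sim \mathrm{id}$ or (B) $T_S\sim \sigma$.

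Case (A) is immediate from the Picard--Lefschetz identity $(T_S)_*[S]=-[S]$ of Proposition~\ref{prop:MT}(a): since $[S]\cdot[S]=\pm 2\neq 0$, the class $[S]$ is not $2$-torsion, so $-[S]\neq [S]$, contradicting $(T_S)_*=\mathrm{id}$.

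Case (B) is the substantive one and gives $(T_S)_*=\sigma_*$ on $H^*(\widetilde X;\mathbb{Z})$. First, $T_S$ is supported in a simply-connected tubular neighbourhood of $S$, hence acts trivially on $H^1(\widetilde X;\mathbb{Z})$; the same is then true of $\sigma_*$, and since $H^1(X;\mathbb{Q})$ is canonically the $\sigma$-invariant part of $H^1(\widetilde X;\mathbb{Q})$, this forces $b_1(\widetilde X)=b_1(X)$. Second, on $H^2(\widetilde X;\mathbb{R})$ the map $(T_S)_*$ is the reflection in $[S]^\perp$: its $(-1)$-eigenspace is $\langle[S]\rangle$, one-dimensional of signature $\epsilon:=\mathrm{sign}([S]\cdot[S])\in\{\pm 1\}$. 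For $\sigma_*$, the transfer $p^*$ identifies the $(+1)$-eigenspace $H^+_\sigma$ with $H^2(X;\mathbb{R})$ carrying twice its intersection form, so $\dim H^+_\sigma=b_2(X)$ and $\mathrm{sign}(H^+_\sigma)=\sigma(X)$; orthogonality together with $\sigma(\widetilde X)=2\sigma(X)$ then yields $\dim H^-_\sigma=b_2(\widetilde X)-b_2(X)$ and $\mathrm{sign}(H^-_\sigma)=\sigma(X)$. Matching the two $(-1)$-eigenspaces forces $b_2(\widetilde X)=b_2(X)+1$ and $\sigma(X)=\epsilon$.

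Combining these identities with $b_1(\widetilde X)=b_1(X)$ and $\chi(\widetilde X)=2\chi(X)$ yields $\chi(X)=1$, hence $b_2(X)=2b_1(X)-1$; together with $\sigma(X)=\epsilon$ this pins down $b^{\epsilon}(X)=b_1(X)$ and $b^{-\epsilon}(X)=b_1(X)-1$. Choosing the orientation of $X$ so that $\epsilon=-1$ (permitted, as the isotopy class of $T_R$ is orientation-independent), this becomes $b^+(X)=b_1(X)-1$ and $b^-(X)=b_1(X)$, which is exactly the configuration excluded by the hypothesis. This rules out Case (B) as well, completing the contradiction. The main anticipated obstacle is the signature bookkeeping in Case (B): one must correctly set up the transfer and intersection-form comparisons so that the $(-1)$-eigenspaces of $(T_S)_*$ and $\sigma_*$ can be matched both in dimension and in signature, producing precisely the Betti-number configuration that the hypothesis is designed to obstruct.
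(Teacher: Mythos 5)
Your proof is correct and follows essentially the same route as the paper's: pass to the double cover where $R$ lifts to a sphere $S$, lift the putative isotopy, rule out the identity lift via the action on $[S]$, and then pin down $b^{\pm}(X)$ in terms of $b_1(X)$ by identifying $(T_S)_*$ with the deck involution and using $\chi(\widetilde X)=2\chi(X)$ and $\sigma(\widetilde X)=2\sigma(X)$. Your eigenspace/transfer bookkeeping is merely a repackaging of the paper's computation of $b^{\pm}_f(\widetilde X)$ together with $H^*(X)\cong H^*(\widetilde X)^f$, and the slight orientation ambiguity you wave away in the normal Euler number $+1$ case is present in the paper's own proof as well (and is harmless in the finite $\pi_1$ applications).
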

 
\begin{proof}
Since $R$ is essential, there is a double cover $p\colon \widetilde{X} \to X$ such that $p^{-1}(R)=S$. Since the normal Euler number of $R$ is $\pm 1$, that of $S$ is $\pm 2$ in $\widetilde{X}$. Let $\tau' \in \D(X)$,  supported in a tubular neighborhood of $R$,  represent $T_R$,  then it is covered by some $\tau \in \D(\widetilde{X})$,  supported in a tubular neighborhood of $S$,  representing $T_S$; that is $\tau' \circ p = p \circ \tau$. 
We have the following commuting diagram:
	$$\begin{tikzcd}
		\widetilde{X} \arrow{r}{\tau}  \arrow{d}[left]{p} \arrow{dr}[below = 7, left =-5]{\tau' \circ p}& \widetilde{X}\arrow{d}{p}\\
		X \arrow{r}[below]{\tau'} & X. 
	\end{tikzcd}
	$$
 
Now assume for contradiction that $T_R=1$, so $\tau' \sim id_X$ in $\D(X)$. Then $\tau' \circ p \sim p$. Looking at the top-right triangle in the diagram, we can use homotopy lifting property of coverings to conclude  that $\tau \sim f$ for some $f \in \D(\widetilde{X})$ covering $id_X$. So $f$ is a deck transformation of the double cover $p$ and necessarily $f^2=id_{\widetilde{X}}$. And $f\neq id_{\widetilde{X}}$ because $f_*=\tau_*=(T_S)_*$ acts non-trivially on $[S] \neq 0$ in $H_2(\widetilde{X})$. So $f$ must be the non-trivial deck transformation.  Running the same arguments for $(\tau')^m$ with odd $m$ instead of $\tau'$,  we also get a lift $f \sim \tau^m$  which is the non-trivial deck transformation since $f_*=\tau_*=(T_S)^m_*=(T_S)_*$. 

Since $f$ is the non-trivial deck transformation of the covering $p \colon \widetilde{X} \to X$, we have $H^i(X)\cong H^i(\widetilde{X})^f$, the fixed subspace of $H^i(X)$ under the action of $f$. Since $f_*=(T_S)_*$, $f$ acts trivially on $H^1(\widetilde{X})$, so we get
$$b_1(X)=b_1(\widetilde{X}) ,$$ 
and by Proposition~\ref{prop:MT}(a), $f$ acts on $H^2(\widetilde{X})$ such that  we have $b^+_f(\widetilde{X})=b^+(\widetilde{X})$ and $b^-_f(\widetilde{X})=b^-(\widetilde{X})-1$, which implies
\begin{align*}
		b^+(X)&=b^+(\widetilde{X})\\
		b^-(X)&=b^-(\widetilde{X})-1 .
\end{align*}
Here $b_f^\pm$ denote the maximal dimensions of the positive-definite and negative-definite $f$-invariant subspaces of the free part of $H_2(X)$. Lastly, since $ \widetilde{X}$ is a double cover of $X$, we also have
\begin{align*}
\chi(\widetilde{X})&=2\chi(X)\\
\sigma(\widetilde{X})&=2\sigma(X)
\end{align*}
where $\chi$ denotes the Euler characteristic. 

Combining these equalities, we see that the topology of $X$ is quite restricted:
\begin{align*}
b^+(X)&=b_1(X)-1 \\
b^-(X)&=b_1(X). 
\end{align*}
By the hypotheses of the proposition, we conclude that $T_R^m \neq 1$ in $\M(X)$ for any odd integer $m$.
\end{proof}

\smallskip

\begin{remark} \label{rk:trivialPT}
The proof of the propositionshows that for an essential $R$,  we actually have $T_R \neq 1$ in the \emph{topological} mapping class group $\pi_0(\textrm{Homeo}^+(X))$.  However, for a non-essential $R$ this does not need  to be the case: Take for instance the standard embedding of $\RP$ in $ \CP$ as the fixed point set of the complex conjugation; it is Lagrangian with respect to the standard Fubini-Study K\"{a}hler form $\omega_{FS}$ on $ \CP$.  Since $T_R$  acts trivially on the homology by Proposition~\ref{prop:PT}(a),  it follows from \cite{Quinn} that $T_R=1$ in $\pi_0(\textrm{Homeo}^+(\CP))$.  And in fact,  by Gromov's work in  \cite{Gromov},  $T_R=1$ even in $\M(\CP, \omega_{FS})$, and thus in the smooth mapping class group $\M(\CP)$. 
\end{remark}

There are essential projective planes with normal Euler numbers $\pm 1$ in closed \mbox{$4$--manifolds}; we present a particularly interesting example in Lemma~\ref{lem:Hitchin}(b) in Section~\ref{sec:proofthmb} and further examples in the proof of Theorem~C in Section~\ref{sec:proofthmc}.

\smallskip
\subsection{Multi-reflections.}
Let $c$ be the complex conjugation $[z_0:z_1:z_2] \mapsto [\bar{z}_0:\bar{z}_1:\bar{z}_2]$ on the complex projective plane $\CP$, followed by an isotopy in the complement of  
$\CPo:=\{z_2=0\} \subset \CP$ taking a $4$--ball $D$ identically back to itself. Reversing the orientation of the $4$--manifold, this prescribes a mapping class in $\M(\CPb \setminus \nu D)$. 

If $S$ is an \textit{exceptional sphere} in $X$, namely, an embedded sphere with normal Euler number $-1$, then there is a tubular neighborhood $N$ of $S$ and an orientation-preserving diffeomorphism $\phi'' \colon N \to \CPb \setminus \nu D$.
So, extending $\phi''^{-1}\circ c \circ\phi''$ by identity on $X \setminus N$, we get a mapping class $R_{S} \in \M(X)$, studied under the name a \emph{reflection} along $S$ by Konno, Miyazawa and Taniguchi in \cite{KonnoEtal}.

\begin{defn} \label{def:MR}
Let $S=\sqcup_{i=1}^k S_i$ be a disjoint union of embedded spheres with normal Euler numbers $-1$ in an oriented $4$--manifold $X$. The \textit{multi-reflection} along $S$ is the  mapping class $R_{S}:= R_{S_1}\cdot \ldots \cdot R_{S_k}$ in $\M(X)$.
\end{defn}

As in the case of multi-twists, because $R_{S_i}$ and $R_{S_j}$ for each $i \neq j$ can be supported away from each other, the order of the factors in the definition of $R_S$ does not matter. 

\begin{remark}
By switching the orientations we can also define a multi-reflection $T_S$ along a disjoint union of embedded spheres $S_i$ with normal Euler numbers $+1$. While one can more generally reflect along a mix of spheres with normal Euler numbers $\pm 1$, we will not really utilize those mapping classes.
\end{remark}

We record a couple of basic properties of multi-reflections:

\begin{prop} \label{prop:MR}
Let $R_S \in \M(X)$ be the multi-reflection along $S=\sqcup_{i=1}^k S_i$ and $N$ a compact tubular neighborhood of $S$.  Set $e_i:=[S_i] \in H_2(X)$.
	\begin{enumerate}[\rm{(}a\rm{)}]
		\item $ (R_S)_*(a)=a+ 2\sum_{i=1}^k (a \cdot e_i) \,e_i $ for any $a \in H_2(X)$.
		\item $R_S$ has order $2$ in $\M(N)$, and therefore also in $\M(X)$.
	\end{enumerate}
\end{prop}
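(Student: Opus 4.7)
The plan is to follow the template of Proposition~\ref{prop:MT}, exploiting the disjoint supports of the factors $R_{S_i}$.  First I would note that when $i \neq j$, the reflections $R_{S_i}$ and $R_{S_j}$ can be chosen with disjoint supports in $\D(X)$, so they commute, and each $R_{S_i}$ fixes $e_j$ for $j \neq i$ (since $S_j$ lies outside a neighborhood of $S_i$).  This reduces both parts to the single-reflection case.  For (a), once the single-reflection formula is in hand, the multi-reflection formula will follow by a term-by-term induction on $k$ entirely analogous to Proposition~\ref{prop:MT}(a), using $e_i \cdot e_j = 0$ for $i \neq j$.  For (b), commutativity yields $R_S^2 = R_{S_1}^2 \cdot \ldots \cdot R_{S_k}^2$, so it suffices to show each factor is trivial in $\M(N_i)$.

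For the single-reflection case of (a), my approach is to work in the local model $N \cong \CPb \setminus \nu D$.  I would first observe that complex conjugation $c$ has Jacobian determinant $+1$ on $\CP$, hence is orientation-preserving on the underlying $4$-manifold, while reversing orientation on every complex submanifold; in particular $c_*[\CPo] = -[\CPo]$, giving $(R_S)_*(e) = -e$, which matches the formula via $e \cdot e = -1$ since $-e = e + 2(e \cdot e)\,e$.  Next, for any class $a \in H_2(X)$ with $a \cdot e = 0$, I would use the standard tubing argument (valid because $S \cong S^2$ is simply connected) to represent $a$ by a surface disjoint from $S$, so that $R_S$ acts trivially on it.  Finally, decomposing an arbitrary class as $a = -(a \cdot e)\,e + \beta$ with $\beta \cdot e = 0$ and combining the two computations will yield $(R_S)_*(a) = a + 2(a \cdot e)\,e$.

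For the single-reflection case of (b), my plan is to exploit the fact that $c$ is a genuine involution of $\CPb$, i.e.\ $c^2 = \mathrm{id}$ pointwise.  By definition $\tilde{c} = \psi_1 \circ c$, where $\psi_t$ is an isotopy in $\CP \setminus \CPo$ from $\psi_0 = \mathrm{id}$ to $\psi_1$ arranged so that $\tilde{c}$ is the identity on a neighborhood of $\partial D$.  A direct computation using $c^2 = \mathrm{id}$ gives $\tilde{c}^2 = \psi_1 \circ (c\psi_1 c^{-1})$.  Choosing $D$ to be $c$-invariant (e.g.\ a small ball centered at a fixed point of $c$ on the real projective plane $\RP \subset \CPb$), the conjugated isotopy $c\psi_t c^{-1}$ also lies in $\CP \setminus \CPo$ and fixes the same collar of $\partial D$, so concatenating $\psi_t$ with $c\psi_t c^{-1}$ exhibits $\tilde{c}^2$ as isotopic to the identity within $\D(N_i, \partial N_i)$.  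This serves as the direct analog of the Kronheimer/Seidel explicit isotopy for Dehn twists used in Proposition~\ref{prop:MT}(b).  The main technical hurdle I anticipate is arranging $\psi_t$ so as to remain compatible with the collar of $\partial D$ throughout the isotopy, which is where the careful bookkeeping will have to happen.
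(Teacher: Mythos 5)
Part (a) of your proposal is correct and is essentially the paper's argument: the paper splits $H_2(X)\cong H_2(X')\oplus\langle e_1,\dots,e_k\rangle$ via $X\cong X'\#k\,\CPb$ and reads off $(R_{S_i})_*e_i=-e_i$ from the local model, which is the same computation you perform through the decomposition $a=-(a\cdot e)\,e+\beta$ with $\beta\cdot e=0$; your reduction to single reflections via disjoint supports is also how the paper handles the multi-reflection.

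The gap is in part (b), in the single-reflection step. Your path $\Phi_t=\psi_t\circ(c\psi_t c^{-1})$ does run from the identity to $\tilde c^{\,2}$ inside $\Diff(\CPb\setminus\CPo)$, but it is \emph{not} an isotopy rel a neighborhood of $\partial(\nu D)$: only the composite $\psi_1\circ c$ is arranged to be the identity near $D$, and since $c|_D$ is a nontrivial involution of the ball (its fixed set is a $2$--disk through the center), $\psi_1$ itself must act nontrivially on a collar of $\partial D$, and the intermediate $\psi_t$ a fortiori move that collar. So no amount of bookkeeping makes $\Phi_t$ fix the collar throughout; what your argument actually establishes is only that $\tilde c^{\,2}=1$ in $\M(\CPb)$, which is immediate from $c^2=\mathrm{id}$ and carries no content for the statement at hand. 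The entire issue is whether triviality in $\M(\CPb)$ implies triviality in $\M(\CPb\setminus\nu D)$ rel boundary; the loop of embedded collars traced out by $\Phi_t$ determines a class in $\pi_1$ of an embedding (or frame) space which could a priori obstruct lifting the isotopy to one rel $\nu D$. This is precisely where the paper invokes Giansiracusa's theorem, via Konno--Miyazawa--Taniguchi, that $\M(\CPb\setminus D)\cong\M(\CPb)$ --- a genuine external input that your local construction does not recover. A minor further omission: order exactly $2$ also requires $R_S\neq 1$ in $\M(N)$, which you do not address, though it follows from part (a) since $(R_S)_*e_i=-e_i$ in $H_2(N)$.
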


\begin{proof}
(a) We can identify $X\cong X' \#k \CPb$ and each $S_i$ with a  $\CPo$ in a distinct $\CPb$ summand. For $a=a'+\sum_{i=1}^k u_i e_i$, with $a' \in H_2(X')$ and $u_i\in \Z$, we have 
	\begin{align*}
		(R_S)_*(a)
		=a'-\sum_{i=1}^k u_i e_i= a-2\sum_{i=1}^k u_i e_i
		 =  a+2\sum_{i=1}^k (a\cdot e_i) e_i.
	\end{align*}
This is because $R_S$ acts trivially away from $S$ and the local model of the reflection $c$ conjugates $\CPo$, so $R_S$ maps each hyperplane class $e_i$ to $-e_i$.

\noindent (b) It is observed by Konno, Miyazawa and Taniguchi in \cite{KonnoEtal} that by a result of Giansiracusa, the square of a reflection is equal to identity in $\M(\CPb \setminus D) \cong \M(\CPb)$ and thus in any $\M(X)$. The same also holds for the multi-reflection, since $R_S^2=R_{S_1}^2 \cdot \ldots \cdot R_{S_k}^2=1$. On the other hand, $R_S$ acts non-trivially on the homology.  So it should have order $2$ in $\M(N)$.
\end{proof}

\smallskip
\begin{remark}
Unlike Dehn twists and projective twists, multi-reflections cannot be described as symplectic mapping classes. As for a general feel, note that a symplectic $4$--manifold $(X, \omega)$ that is not rational or ruled, any embedded sphere of normal Euler number $-1$ is homologous (up to sign) to a symplectic rational $(-1)$--curve \cite{Li}. The action of the reflection on this integral homology class is $e \mapsto -e$, but the canonical class, which should be preserved under a symplectomorphism,  pairs with $e$ non-trivially.
\end{remark}

\medskip
\section{Obstructions from Seiberg-Witten theory}

In Theorems~\ref{thm:obstruction1} and ~\ref{thm:obstruction2} below we are going to list some constraints on lifting subgroups generated by multi-twists and multi-reflections, respectively. These results and their proofs are direct generalizations of the ones given by Konno in \cite{Konno} and Konno, Miyazawa and Taniguchi in \cite{KonnoEtal} using different flavors of involutive Seiberg-Witten theory.

\begin{lemma} 
Let $X$ be a closed oriented spin $4$--manifold with $\sigma(X) <0$ and let $S=\sqcup_{i=1}^k S_k \subset X$ be a disjoint union of embedded spheres $S_i$,  where each one of the first $k_+$ of \,$\{S_1 , \ldots, S_k\}$ has normal Euler number $+2$ and of the remaining \,$k_-=k-k_+$ has normal Euler number $-2$. Suppose that $\dfrac{\sigma}{2} \neq k_+ - k_-$. If $k_+=0$ or $-\dfrac{\sigma}{16} +1 >k_+>0$, then the multi-twist $T_S \in \M(X)$ is not represented by any finite order element in $\D(X)$.
\end{lemma}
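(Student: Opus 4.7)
The strategy is to adapt Konno's argument for a single Dehn twist in \cite{Konno} to the multi-twist case: I would assume for contradiction that a finite-order $f \in \D(X)$ satisfies $[f] = T_S$ in $\M(X)$, and then derive a violation of an equivariant $10/8$-type inequality. By Proposition~\ref{prop:MT}(b), $T_S$ has order two in $\M(X)$, hence $f_* = (T_S)_*$ is an involution on $H_*(X;\R)$; by Proposition~\ref{prop:MT}(c), $T_S$ fixes every spin structure on $X$, so after choosing one the cyclic group $\langle f \rangle \subset \D(X)$ lifts to a spin cyclic action, feeding $(X, \langle f \rangle)$ into Konno's equivariant Bauer--Furuta machinery.

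The cohomological data are computed directly from Proposition~\ref{prop:MT}(a), which realizes $f_*$ as the orthogonal reflection in the span of $c_1, \ldots, c_k$. Since these classes are pairwise orthogonal with $c_i \cdot c_i = +2$ for $i \le k_+$ and $c_i \cdot c_i = -2$ for $i > k_+$, the intersection form on the $-1$-eigenspace of $f_*$ has signature $(k_+, k_-)$, so
\[
b^+_{-f}(X) = k_+, \qquad b^-_{-f}(X) = k_-.
\]
The $+1$- and $-1$-eigenspaces of $f_*$ then carry signatures $\sigma(X) - (k_+ - k_-)$ and $k_+ - k_-$ respectively, and the hypothesis $\sigma(X)/2 \neq k_+ - k_-$ asserts these differ---precisely the non-degeneracy under which Konno's equivariant invariant is non-trivial and provides an obstruction.

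For a spin-preserving finite-order diffeomorphism of a closed spin $4$-manifold with $\sigma < 0$ satisfying the above non-degeneracy, Konno's $10/8$-type inequality in \cite{Konno} yields a lower bound of the form
\[
b^+_{-f}(X) \;\geq\; -\dfrac{\sigma(X)}{16} + 1.
\]
Substituting $b^+_{-f}(X) = k_+$ contradicts both cases of our hypothesis: when $k_+ = 0$, the inequality forces $\sigma(X) \geq 16$, violating $\sigma(X) < 0$; when $-\sigma(X)/16 + 1 > k_+ > 0$, the inequality $k_+ \geq -\sigma(X)/16 + 1$ fails by assumption. Hence no such finite-order $f$ can exist.

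The main obstacle lies not in the eigenspace bookkeeping but in verifying the applicability of Konno's equivariant inequality to an $f$ that need not literally be an involution---one only knows $f^2 \sim \mathrm{id}$ in $\D(X)$. One has to run the equivariant Bauer--Furuta argument for the full cyclic group $\langle f \rangle$ and then extract the required $\Z/2$-level constraint governed by the cohomological involution $f_* = (T_S)_*$, mirroring the corresponding step in \cite{Konno}. The side hypothesis $\sigma(X)/2 \neq k_+ - k_-$ is precisely what keeps the equivariant invariant out of its degenerate regime, where the inequality becomes vacuous.
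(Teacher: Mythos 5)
Your proposal is correct and follows essentially the same route as the paper: assume a finite-order realization $f$, use Proposition~\ref{prop:MT}(a) to identify $f_*$ on $H_2(X;\R)$ as the reflection fixing the orthogonal complement of $\langle c_1,\dots,c_k\rangle$, deduce $b^+_f=b^+-k_+$ and $b^-_f=b^--k_-$ (equivalently $b^+_{-f}=k_+$) together with $\sigma_f\neq\sigma/2$, and contradict Konno's Theorem~1.3. The only stylistic difference is that the paper applies Konno's theorem directly to the finite-order $f$ (his statement already covers arbitrary finite order, with $b^\pm_f$ defined via invariant subspaces), so the cyclic-group caveat you flag as the main obstacle requires no extra work.
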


\begin{proof}
Suppose on the contrary that $f \in \D(X)$ has a finite order and is isotopic to the multi-twist $T_{S}$. We will rely on Theorem 1.3 of Konno \cite{Konno} to get a contradiction. To apply it, we need to compute $b_f^+$ and $b_f^-$, which are the maximal dimensions of the positive-definite and negative-definite $f$-invariant subspaces of $H_2(X,\R)$. 
  
  It follows from Proposition~\ref{prop:MT}(a) that for any $a \in H_2(X; \R)$,  we have 
  $$(T_{S})_*(a) = a + (a\cdot c_1) c_1 + \cdots + (a\cdot c_k)c_k$$
where we now take $c_i:= [S_i] \in H_2(X;\R)$. So the fixed subspace of $H_2(X;\R)$ is $$U:= \{a \in H_2(X;\R)\;|\; a\cdot c_i = 0 \; \forall i \}$$ 
because $c_i$ are linearly independent. Let $U_i:= \{a \in H_2(X;\R)\;|\; a\cdot c_i = 0  \}$, so we have $U=\cap_{i=1}^k U_i$. We claim that $U$ has dimension $b_2-k$, which we can prove by induction. The base step is immediate because $c_1^2\neq 0$, so it is essential, and the intersection form is non-degenerate. Suppose that $\cap_{i=1}^{k-1} U_i$ has dimension $b_2-(k-1)$. To prove the claim for $k$ it suffices to show that $\cap_{i=1}^{k} U_i \subset \cap_{i=1}^{k-1} U_i$ is a proper inclusion, i.e. $\cap_{i=1}^{k-1} U_i \not\subset U_{k}$. The latter holds, since $c_{k}\cdot c_i=0$ for all $i\leq k-1$, because the spheres are disjoint, and $c_{k}^2 \neq 0$.
		
Let $W:=\langle c_1, \dots, c_k \rangle$. Then $\dim W =k$, because if $x_1c_1+\cdots +x_kc_k=0$ for some $x_i \in \R$, then for each $i$ we have $c_i \cdot (x_1c_1+\cdots + x_kc_k)=0$, implying that $\pm 2 x_i = 0$,  so $x_i=0$ for all $i$.  This shows, by dimension reasons, that the subspaces $U$ and $W$ are complementary in $H_2(X;\R)$ and $U\cap W = 0$. By Sylvester's law of inertia, $b_f^+=b^+-k_+$ and $b_f^-=b^- -k_-$ and $\sigma_f=\sigma +k_--k_+$. 

Theorem~1.3 of \cite{Konno} then implies that we could not have $k_+=0$ or $-\dfrac{\sigma}{16} +1 >k_+>0$. Hence, we conclude that there is no finite order $f \in \D(X)$ realizing $T_S$.
\end{proof}

\smallskip
\begin{remark} \label{rk:positivesign}
When $\sigma(X)>0$, the statement of the lemma holds verbatim once we switch the roles of $k_+$ and $k_-$ and the sign of $\sigma(X)$.
\end{remark}

\begin{theorem} \label{thm:obstruction1}
Let $X$ be a closed connected $4$--manifold with finite fundamental group, signature $\sigma(X) <0$ (resp. $\sigma(X) >0$) and a spin universal cover. Let $S=\sqcup_{i=1}^k S_i$ be a disjoint union of embedded spheres, where each $S_i$ has normal Euler number $-2$  (resp.\,$+2$) and $k\neq -\sigma/2$ (resp. $k \neq \sigma/2)$. 
The multi-twist $T_S \in \M(X)$ is not represented by any finite order element in $\D(X)$. 
\end{theorem}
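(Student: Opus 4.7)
The strategy is to pass to the universal cover $p \colon \widetilde{X} \to X$, which by hypothesis is a closed spin $4$--manifold, and then invoke the preceding lemma. Let $d := |\pi_1(X)|$, so $\sigma(\widetilde{X}) = d\,\sigma(X)$ has the same sign as $\sigma(X)$. Since each $S_i \cong S^2$ is simply connected, $p^{-1}(S_i)$ consists of $d$ disjoint embedded spheres in $\widetilde{X}$, each with normal Euler number $-2$ (resp. $+2$). Hence $\widetilde{S} := p^{-1}(S)$ is a disjoint union of $dk$ embedded spheres, all of normal Euler number $-2$ (resp. $+2$).

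The plan is to argue that if $f \in \textrm{Diff}^+(X)$ has finite order and represents $T_S$, then $f$ lifts to a finite order element $\widetilde{f} \in \textrm{Diff}^+(\widetilde{X})$ representing the multi-twist $T_{\widetilde{S}}$, which would contradict the previous lemma. To carry this out, first fix any lift $\widetilde{T}_S \in \textrm{Diff}^+(\widetilde{X})$ of a diffeomorphism realizing the multi-twist $T_S$; by definition of $T_S$ (supported in a tubular neighborhood of $S$ and simply connected there), a canonical such lift is precisely a representative of the multi-twist $T_{\widetilde{S}}$ along $\widetilde{S}$. Given an isotopy $\{H_t\}_{t \in [0,1]}$ from $H_0 = T_S$ to $H_1 = f$ in $\textrm{Diff}^+(X)$, use the homotopy lifting property for the finite cover $p$ to lift $H_t$ uniquely to an isotopy $\widetilde{H}_t$ in $\textrm{Diff}^+(\widetilde{X})$ starting at $\widetilde{H}_0 = \widetilde{T}_S$. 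Its endpoint $\widetilde{f} := \widetilde{H}_1$ is then a lift of $f$, and is isotopic to $T_{\widetilde{S}}$.

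Next, I would verify that $\widetilde{f}$ is itself of finite order. Since $f^n = \mathrm{id}_X$ for some $n$, the map $\widetilde{f}^n$ covers $\mathrm{id}_X$ and so is a deck transformation of $p$; as the deck group has order $d$, we get $\widetilde{f}^{nd} = \mathrm{id}_{\widetilde{X}}$. Thus $\widetilde{f}$ is a finite order diffeomorphism of $\widetilde{X}$ realizing the multi-twist $T_{\widetilde{S}}$. In the signature$< 0$ case, $\widetilde{X}$ is spin with $\sigma(\widetilde{X}) < 0$, $\widetilde{S}$ has $k_+ = 0$ components of normal Euler number $+2$ and $k_- = dk$ components of normal Euler number $-2$, and
\[
\frac{\sigma(\widetilde{X})}{2} = \frac{d\,\sigma(X)}{2} \neq -dk = k_+ - k_-,
\]
the last inequality being exactly the hypothesis $k \neq -\sigma(X)/2$. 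The previous lemma (with $k_+ = 0$) then rules out the existence of such a finite order $\widetilde{f}$, giving the desired contradiction. The signature$>0$ case follows by the same argument via Remark~\ref{rk:positivesign}, with the roles of $k_+$ and $k_-$ swapped.

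The only subtle step is the lifting of the isotopy together with the verification that the lifted endpoint does represent $T_{\widetilde{S}}$; this is the point that must be stated cleanly, since everything else is bookkeeping of signatures and component counts. Once that is in place, the theorem is essentially an application of the preceding lemma to the spin manifold $\widetilde{X}$.
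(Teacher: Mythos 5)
Your proposal is correct and follows essentially the same route as the paper: lift the multi-twist and the isotopy to the spin universal cover, note that the lifted diffeomorphism has finite order because its $n$-th power is a deck transformation, and apply the preceding lemma to the $dk$ spheres upstairs. Your explicit verification that $k \neq -\sigma(X)/2$ translates into the lemma's hypothesis $\sigma(\widetilde{X})/2 \neq k_+ - k_-$ for the cover is a welcome bit of bookkeeping that the paper leaves implicit.
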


\begin{proof}
If $X$ is simply-connected (and thus its own universal cover) the result is immediate from the previous lemma. 
Let $p \colon \widetilde{X} \to X$ be the spin universal cover of degree $m= | \pi_1(X)|>1$. Each component $S_i$ of $S$ is a sphere, so $p^{-1}(S_i)$ is necessarily a disjoint union of $m$ embedded spheres, and one can take a tubular neighborhood of $S_i$ evenly covered by tubular neighborhoods of these spheres, all with the same normal Euler numbers as $S_i$. It follows that   $\widetilde{S}:=p^{-1}(S)$ is a disjoint union of $mk$ embedded spheres, each with normal Euler number $-2$, and there are $\widetilde{\tau} \in \D(\widetilde{X})$ and $\tau \in \D(X)$ 
representing the multi-twists $T_{\widetilde{S}} \in \M(\widetilde{X})$ and $T_S \in \M(X)$ such that $\tau \circ p = p \circ \tilde{\tau}$.

Now suppose $\tau$ is isotopic to $f \in \D(X)$ with order $n \in \Z^+$. The map $f$ is covered by some diffeomorphism $\tilde{f} \in \D(\widetilde{X})$, that is $f \circ p = p \circ \tilde{f}$. So $\tilde{f}^n$ is a deck transformation of the universal cover $p \colon \widetilde{X} \to X$. Since this desk transformation group is isomorphic to $\pi_1(X)$, we have $\tilde{f}^{nm}=1$. On the other hand, by the homotopy lifting property, $\widetilde{\tau}$ representing the multi-twist $T_{\widetilde{S}} \in \M(\widetilde{X})$ is homotopic to $\tilde{f}$, which contradicts the lemma {(for $k=k_-$)}.

For the parenthetical case ($\sigma(X)>0$ and all $S_i^2=+2$), the proof goes mutatis mutandis, where we invoke the observation in Remark~\ref{rk:positivesign}.
\end{proof}

We derive the following obstruction for projective twists:

\begin{corollary} \label{cor:obstruction}
Let $X$ be a closed connected $4$--manifold with finite fundamental group, signature $\sigma(X) < -1$ and (resp. $\sigma(X) > 1$) and a spin universal cover. Let $R$ be an essential projective plane in $X$ with normal Euler number $-1$ (resp. $+1)$. The projective twist $T_R \in \M(X)$ is not represented by any finite order element in $\D(X)$. 
\end{corollary}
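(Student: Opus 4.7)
The plan is to reduce the statement to Theorem~\ref{thm:obstruction1} by passing to the double cover afforded by the essentiality of $R$, in which the projective twist unfolds into a Dehn twist along a sphere of normal Euler number $-2$.

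Concretely, I would argue by contradiction: suppose $T_R = [f]$ for some $f \in \D(X)$ of finite order $n$. Essentiality of $R$ provides a double cover $p\colon \widehat{X}\to X$ with $p^{-1}(R)=S$ an embedded sphere of normal Euler number $-2$ (the Euler number doubles under pullback across the $2$-to-$1$ cover). Since $\pi_1(\widehat{X})$ is an index-two subgroup of $\pi_1(X)$, it is finite; and the universal cover of $\widehat{X}$ coincides with the (spin) universal cover of $X$. Picking $\tau \in \D(X)$ representing $T_R$ and supported in a tubular neighborhood of $R$, the construction of the projective twist in Section~2.2 as the $\Z/2$-quotient of the antipodally equivariant Dehn twist on $D\,TS^2$ shows that $\tau$ lifts to $\tilde{\tau} \in \D(\widehat{X})$ representing the Dehn twist $T_S$.

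Next I would transport the isotopy from $\tau$ to $f$ upstairs via the homotopy lifting property of $p$, obtaining an isotopy in $\D(\widehat{X})$ from $\tilde{\tau}$ to some lift $\tilde{f}$ of $f$; in particular $\tilde{f}$ represents $T_S \in \M(\widehat{X})$. Moreover, the identity $p \circ \tilde{f}^n = f^n \circ p = p$ forces $\tilde{f}^n$ to be a deck transformation of $p$, so $\tilde{f}^{2n} = \mathrm{id}_{\widehat{X}}$ and $\tilde{f}$ has finite order. This contradicts Theorem~\ref{thm:obstruction1} applied to $(\widehat{X},S)$: we have $\sigma(\widehat{X}) = 2\sigma(X) < -2 < 0$, the universal cover is spin, $S$ is a single sphere of normal Euler number $-2$, and $k = 1 \neq -\sigma(X) = -\sigma(\widehat{X})/2$ since $\sigma(X) < -1$. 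The parenthetical case ($\sigma(X) > 1$ with $R^2=+1$) runs identically, invoking Remark~\ref{rk:positivesign} in place of Theorem~\ref{thm:obstruction1}'s main signature convention.

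The only delicate point I anticipate is verifying that the lift $\tilde{\tau}$ really represents the Dehn twist $T_S$, rather than, say, its identity-isotopic square; this is essentially built into the definition of $T_R$ in Section~2.2, since the antipodal involution on $D\,TS^2$ is exactly the non-trivial deck transformation of $p$ restricted to a tubular neighborhood of $R$, and the equivariant Dehn twist descends to one single application of the projective twist. Beyond this cover-theoretic bookkeeping, no new Seiberg-Witten input is required: everything is absorbed into the already established Theorem~\ref{thm:obstruction1}.
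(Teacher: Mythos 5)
Your proposal is correct and follows essentially the same route as the paper: pass to the double cover given by essentiality of $R$, where $T_R$ lifts to the Dehn twist $T_S$ along a sphere of square $-2$, transport the hypothetical finite-order realization upstairs via the homotopy lifting property (gaining at most a factor of two in the order), and contradict Theorem~\ref{thm:obstruction1} using $\sigma(\widehat{X})=2\sigma(X)<-2$ so that $k=1\neq -\sigma(\widehat{X})/2$. The paper's own proof is terser, deferring the lifting bookkeeping to ``similar arguments as above,'' but the content is identical.
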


\begin{proof}
Since $R$ is essential, there is a double cover $q \colon X' \to X$ such that $q^{-1}(R)$ is an embedded sphere $S$. Since $R$ has normal Euler number $-1$, that of $S$ should be $-2$.  

Now if $\tau' \in \D(X)$ supported near $R$ represents $T_R$, there is $\tau \in \D(X')$ supported near $S$ that covers it, and for $[\tau']=T_R$ we get $[\tau]=T_S$. By similar arguments as above, we see that if $\tau'$ were to be homotopic to a finite order element in $\D(X)$, then so would $\tau$ in $\D(X')$.  {Note that since $\sigma(X)<-1$,  we have $\sigma(X') < -2$,  so we get a contradiction to Theorem~\ref{thm:obstruction1} (with $k=1$).}

Switching the orientation of $X$, we get the mirror (parenthetical) claim. 
\end{proof}

\begin{remark} \label{rk:multiPT}
The corollary easily generalizes to give us an obstruction to lifting the projective multi-twist $T_R \in \M(X)$ for a disjoint union of essential real projective planes $R= \sqcup_{i-1}^k R_i$ all with the same normal Euler number $\pm 1$,  {provided $k \neq -\sigma$}.
\end{remark}

\smallskip
For the next obstruction, we need the following lemma: 

\begin{lemma}
Let $R_S$ be a multi-reflection along $S=\sqcup_{i=1}^k S_i$ in a closed oriented  $4$--manifold $X$ with $b_1(X)=0$ and $\sigma(X)\neq -2k$. Let $s \in \textrm{Spin}^c(X)$ such that \linebreak $c_1^2(s)-\sigma(X) > 0$ and $R_S^*(s) \cong \overline{s}$. Then $R_S$ is not homotopic to any smooth involution. 
\end{lemma}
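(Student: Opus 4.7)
The plan is to follow the Seiberg--Witten-based scheme used by Konno in the lemma preceding Theorem~\ref{thm:obstruction1}, now relying on the involutive Seiberg--Witten obstruction of Konno, Miyazawa and Taniguchi \cite{KonnoEtal}, which is tailored to reflections (i.e.\ the case $f^*s \cong \overline{s}$).

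First suppose, for a contradiction, that $R_S$ is homotopic to a smooth involution $f \in \D(X)$. Homotopic diffeomorphisms induce the same map on $H^*(X;\Z)$, so $f^* = R_S^*$ on $H^2(X;\R)$; in particular, using $b_1(X)=0$ to remove the torsor ambiguity on Spin$^c$-structures, we have $f^*(s) \cong R_S^*(s) \cong \overline{s}$. By Proposition~\ref{prop:MR}(a), the anti-invariant subspace of $R_S^*$ acting on $H^2(X;\R)$ is $W := \langle e_1, \ldots, e_k\rangle$, which is orthogonal (the $S_i$ are disjoint) and negative-definite ($e_i^2 = -1$); hence $\dim W = k$, and the $f$-invariant part of $H^2(X;\R)$ satisfies $b^+_f(X) = b^+(X)$ and $b^-_f(X) = b^-(X) - k$.

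The next step is to invoke the involutive Seiberg--Witten obstruction from \cite{KonnoEtal}, which, for a smooth involution $f$ satisfying $f^*(s) \cong \overline{s}$, $b_1(X) = 0$, and $c_1^2(s) - \sigma(X) > 0$, yields a numerical constraint on the invariant/anti-invariant splitting of $H^2(X;\R)$ under $f^*$. As in the multi-twist case treated by Konno, substituting our values of $b^\pm_f(X)$ into this constraint forces $\sigma(X) = -2k$, contradicting the hypothesis.

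The main obstacle is adapting the obstruction of \cite{KonnoEtal}, stated there for a single reflection, to the multi-reflection setting. This should be routine: the anti-invariant subspace $W$ decomposes orthogonally as a sum of single-reflection anti-invariant lines, and the index-theoretic inputs --- the $G$-signature theorem and a $\mathrm{Pin}^-(2)$-equivariant Bauer--Furuta-type estimate for real Spin$^c$-structures --- are additive over such orthogonal decompositions, in direct parallel with Konno's passage from single Dehn twists to multi-twists in the spin case.
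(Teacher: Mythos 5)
Your proposal follows essentially the same route as the paper: assume $R_S$ is homotopic to a smooth involution $f$, use Proposition~\ref{prop:MR}(a) to compute $b^+_f(X)=b^+(X)$ and $b^-_f(X)=b^-(X)-k$ (equivalently $\sigma_f(X)=\sigma(X)+k\neq \sigma(X)/2$), and derive a contradiction from the involutive Seiberg--Witten constraint of \cite{KonnoEtal}. The only differences are cosmetic: the paper additionally records that the involution is of odd type (via Lemma~2.10 of \cite{KonnoEtal}) before citing their Theorem~1.1, and your worry about extending the obstruction from single reflections to multi-reflections is moot, since that theorem applies to arbitrary smooth involutions with $f^*(s)\cong\overline{s}$ --- the only reflection-specific input is the $b^\pm_f$ computation you already carried out.
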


\begin{proof}
Suppose that the multi-twist $R_S$ is homotopic to a smooth involution $f$. Then $f^*(s) \cong \overline{s}$. By Proposition~\ref{prop:MR}(a), $f$ acts on any $a \in H_2(X; \R)$ as
$$(f)_*(a)=a+ 2\sum_{i=1}^k (a \cdot e_i) \,e_i $$
for $e_i:=[S_i] \in H_2(X; \R)$.
Then
$$b^+_f (X)=b^+(X)$$ 
$$b^-_f (X) = b^-(X)-k$$
$$\sigma_f(X)=\sigma(X)+k.$$
Here $\sigma_f (X) \neq \sigma(X) / 2 \iff \sigma(X)+k \neq \sigma(X) / 2 \iff \sigma(X) \neq -2k$, which is what we assumed. Now, proceeding as in \cite{KonnoEtal}, since $f$ is odd type by \cite{KonnoEtal}[Lemma~2.10] and $b^+_f (X)=b^+(X)$,  we get a contradiction by Theorem 1.1 of \cite{KonnoEtal}. We conclude that $R_S$ cannot be homotopic to such $f$.
\end{proof}

\begin{theorem} \label{thm:obstruction2}
Let $X'$ be a closed oriented spin $4$--manifold with $b_1(X')=0$ and  $\sigma(X')<0$. Let $X= X' \# k \, \CPb$ and  $S \subset X$ be a disjoint union of {$k \neq -\sigma(X) / 2$} exceptional spheres. Then the  multi-reflection $R_S \in \M(X)$ is not represented by an order $2$ element in $\D(X)$.
\end{theorem}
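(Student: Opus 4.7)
The plan is to invoke the preceding lemma; for this I will construct a Spin$^c$ structure $s$ on $X$ satisfying $c_1(s)^2 - \sigma(X) > 0$ and $R_S^* s \cong \overline{s}$, and then verify the topological hypotheses on $X$. Since each $\CPb$ summand is simply-connected, $H_1(X) \cong H_1(X')$ is torsion by assumption, so $b_1(X) = 0$. Moreover $\sigma(X) = \sigma(X') - k < 0$, and the hypothesis $k \neq -\sigma(X)/2$ is precisely $\sigma(X) \neq -2k$.

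I would then take $s$ to be the connected-sum Spin$^c$ structure assembled from the spin structure on $X'$ (for which $c_1 = 0$) and the standard Spin$^c$ structure with $c_1 = e_i$ on the $i$-th $\CPb$ summand, where $e_i$ denotes the hyperplane class, naturally identified with $[S_i]$. Then $c_1(s) = \sum_{i=1}^k e_i$ is characteristic, and
\[
c_1(s)^2 - \sigma(X) \;=\; -k - (\sigma(X') - k) \;=\; -\sigma(X') \;>\; 0.
\]
For the invariance property, I would argue as follows. The reflection $R_{S_i}$ is supported in a neighborhood of $S_i$ diffeomorphic to $\CPb$ minus a ball, where it acts by (an isotopy of) complex conjugation and therefore sends the standard Spin$^c$ structure with $c_1 = e_i$ to its conjugate; outside the support of $R_S$, the structure $s$ restricts to the self-conjugate spin structure of $X'$. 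Hence $R_S^* c_1(s) = -c_1(s) = c_1(\overline{s})$. By universal coefficients, $\operatorname{Tor}(H^2(X;\Z)) \cong \operatorname{Tor}(H_1(X';\Z))$, which has no $2$-torsion by hypothesis, so Spin$^c$ structures on $X$ are determined by their first Chern classes, and therefore $R_S^* s \cong \overline{s}$ as Spin$^c$ structures.

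Applying the preceding lemma then yields that $R_S$ is not homotopic to any smooth involution on $X$, and \emph{a fortiori} it is not represented by any order-$2$ element of $\D(X)$. The most delicate point I anticipate in carrying out this plan is the construction of $s$ with $c_1(s)$ lying in the sublattice $L = \langle [S_1], \ldots, [S_k] \rangle$: when the $S_i$'s are the standard hyperplanes of the given connected-sum decomposition this is immediate as above, while in the general case one must either produce a diffeomorphism of $X$ reducing to this standard picture, or build $s$ directly from the sublattice $L$ and verify that the orthogonal complement $L^\perp$ is an even lattice so that a characteristic class lying in $L$ exists.
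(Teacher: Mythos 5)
Your proposal is correct and follows essentially the same route as the paper: both take the Spin$^c$ structure with $c_1(s) = \sum_i [S_i]$ (characteristic because $Q_{X'}$ is even), compute $c_1^2(s)-\sigma(X) = -\sigma(X') > 0$, use the absence of $2$--torsion to conclude $R_S^*s \cong \overline{s}$ from $R_S^*c_1(s) = -c_1(s)$, and then apply the preceding lemma. The ``delicate point'' you flag---whether the $[S_i]$ can be identified with the standard hyperplane classes of the connected-sum decomposition---is also passed over silently in the paper (which simply identifies each $S_i$ with a $\CPo$ in a distinct $\CPb$ summand), so your explicit acknowledgment of it is, if anything, more careful.
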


\begin{proof}
Let $c$ be the Poincar\'{e} dual to $\sum_{i=1}^k [S_i]$, where $S_i$ are the components of $S$.
From 
\[
H^2(X) \cong H^2(X') \oplus \, H^2(\CPb) \oplus \cdots \oplus H^2(\CPb)
\]
and $X'$ having an even intersection form, we see that $c$ is characteristic and $c=c_1(s)$ for some $s \in \textrm{Spin}^c(X)$. From the action of $R_S$ on $H_2(X)$ one can see that $R_S^*(s) \cong \overline{s}$. 

On the other hand, we have
$$ c_1^2(s)-\sigma(X) = -k - \sigma(X')+k =-\sigma(X')> 0.$$
As we also have $b_1(X)=0$, using the previous lemma we conclude that $R_S$ is not homotopic to an involution in $\D(X)$.
\end{proof}

\medskip
\section{Proof of Theorem~A} \label{sec:proofthma}

In this section, we present the non-spin examples with indefinite intersection forms promised in Theorem~A.  All the mapping classes we are going to involve are multi-twists.

\begin{proof}[Proof of Theorem~A] We deal with the odd and even intersection forms separately. 

\noindent \underline{\textit{Odd forms}}:
Our building block is a closed oriented smooth $4$--manifold $T$ with finite fundamental group, signature $\sigma(T)=-1$ and spin universal (finite) cover $\widetilde{T}$; the existence of such a $4$--manifold is proved by Teichner in \cite{Teichner}.
We are going to thus refer to $T$ as \emph{Teichner's manifold}.\footnote{The existence of $T$ follows from a spectral sequence calculation in the spin bordism groups in \cite{Teichner}; in particular, one does not have further information on $b_2(T)$. We can of course take our $T$ to be (one of the smooth $4$--manifolds) with the smallest $b_2$ among possible candidates.} The fundamental group of $T$ is $\pi_1(T)= \Z_{16} \rtimes \Z_8$ with action of $\Z_8$ on $\Z_{16}$ given by $t \to t^5$; in particular $\pi_1(T)$ admits a presentation with two generators. Let $W$ be a wedge of two circles in $T$ which represent these two generators. 

We can now take a connected sum of two copies of $T$ along $W$, which is a particular case of a \emph{connected sum along $1$--skeletons} \cite{Kreck}. We get a new closed oriented smooth $4$--manifold $X_2:= (T \setminus \nu W) \cup (T \setminus \nu W)$, by gluing the boundaries with an orientation-reversing diffeomorphism $\phi$ of $\partial (\nu W) \cong S^1 \x S^2 \, \# \, S^1 \x S^2$, say, by taking $\rm{id}_{S^1} \x r$ on each $S^1 \x S^2$ summand, where r is a reflection on $S^2$. 

This $4$--manifold $X_2$ enjoys a few properties that are of importance to us. First, by Seifert-Van Kampen, $\pi_1(X_2)= \pi_1(T)$, generated by a push-off of $W$ into either copy of $T \setminus \nu W$. Second, we can see that $X_2$ also has a (finite) spin cover, as follows: Given the spin universal cover $p \colon \widetilde{T} \to T$, let $U:= p^{-1}(T \setminus \nu W)$. As $W$ contains the $\pi_1(T)$ generators, $\partial U = p^{-1}(\partial \nu W)$ is connected, and $\pi_1(U) \cong \pi_1(\widetilde{T}) \cong 1$. Therefore we get a spin universal cover $\widetilde{X}_2:=U \cup_{\widetilde{\phi}} U$ of $X_2$, as we glue the boundaries by an orientation-reversing diffeomorphism $\widetilde{\phi}$ covering $\phi$ (the existence of which is easy to see due to our choice of $\phi$ above).  Third, because $\nu W$ has the homotopy type of a $1$--skeleton, the intersection form $Q_{X_2}$ is also odd and we have $\sigma(X_2)=2 \sigma(T)=-2$ by Novikov additivity.

With the above in mind, we can now inductively define a closed oriented smooth $4$--manifold $X_s$ by taking a connected sum of $|s|$ copies of $T$ along $W$, so that $\pi_1(X_s)=\pi_1(T)$, $X_s$ has a spin universal (finite) cover $\widetilde{X}_s$, and signature $\sigma(X_s)=s$. A straightforward calculation shows that $b_2(X_s)= |s| (b_2(T)+4)-4$. For $s>0$, we run the same construction to build $X_s$, this time using the building block $\overline{T}$, Teichner's manifold with the reversed orientation. The algebraic topology of $X_s$ for $s>0$ reads the same as above. 

Now, for any integer $s \neq 0$ and each $n \in \Z^+$, we take $X_{s,n}:=X_s \ \#_n \, (S^2\times S^2)$, the connected sum of $X_s$ with $n$ copies of $S^2 \x S^2$. The closed oriented smooth $4$--manifold $X_{s,n}$ has $\pi_1(X_{s,n})=\pi_1(T)$, a spin universal (finite) cover $\widetilde{X}_{s,n} \cong \widetilde{X}_s \#_{128 n} (S^2\times S^2)$, $\sigma(X_{s,n})=s$, and an odd, indefinite intersection form of rank $|s| (b_2(T)+4)-4+2n$. 

Hence, for any given $s$, the family of $4$--manifolds $\{ X_{s,n} \, | \, n \in \Z^+\}$ realizes all but finitely many odd indefinite unimodular integral quadratic forms. (As the signature is fixed, the rank of the intersection form increases by $2$ each time, realized by an additional $S^2 \x S^2$ summand.) The additional $S^2 \x S^2$ summands contain many disjoint emedded spheres with normal Euler numbers $\pm 2$, e.g. the diagonals and anti-diagonals of each $S^2 \times S^2$ summand. Taking any $S$ that is a disjoint union of these spheres (with normal Euler numbers $-2$ if $s<0$ and $+2$ if $s>0$, and $\pi_0(S) \neq |\sigma|/2$), we get an order two subgroup $\langle T_S \rangle$ of $\M(X_{s,n})$ (by Proposition~\ref{prop:MT}(b)) that does not lift to $\D(X_{s,n})$ by Theorem~\ref{thm:obstruction1}.

\smallskip
\noindent \underline{\textit{Even forms:}} 
Our construction will be similar to the one we gave in the odd case, but this time our building block is the \textit{Enriques surface} $E$, which has $\pi_1(E)=\Z_2$, spin universal cover the $K3$ surface, and even intersection form $Q_E \cong E_8 \oplus H$. Here $E_8$ denotes the standard even negative-definite unimodular integral quadratic form of rank eight and $H$ denotes the even indefinite hyperbolic form of rank two, i.e. $H \cong Q_{S^2 \x S^2}$. Let $V$ denote a loop that generates the $\pi_1(E)$.

Recall that an even indefinite unimodular integral quadratic form $Q$ with non-zero signature is isomorphic to $\pm r\, E_8 \oplus q \, H$ for some $r \in \Z^+$, $q \in \Z$. First assume $s=8r <0$. Similar to before, we first build the closed oriented smooth $4$--manifold $Y_r$, which is a connected sum of $|r|$ copies of $E$ along $V$, so that $\pi_1(Y_r)=\pi_1(E)$, $Y_r$ has a spin universal (finite) cover, and signature $\sigma(Y_r)=s$. If $s>0$, we run the same construction to build $Y_r$, this time using the building block $\overline{E}$. Now consider $Y_{r,n}:= Y_r \ \#_n S^2 \x S^2$, for any $n \in \N$, which also have $\pi_1(Y_r)=\Z_2$, a spin universal cover, $\sigma(Y_{r,n})=-8r$, and an even intersection form of rank $12|r|+2(n-1)$. 

For any given $s=8r \neq 0$, the family of $4$--manifolds $\{ Y_{r,n} \, | \, n \in \N\}$ realizes all but finitely many even unimodular integral quadratic forms. In this case, not only the additional $S^2 \x S^2$ summands contain many disjoint embedded spheres with normal Euler numbers $\pm 2$, but also in the $Y_r$ summand, each $E \setminus \nu V$ (resp. $\overline{E \setminus \nu V}$) contains spheres of normal Euler number $-2$ (resp. $+2$) as well. (For instance, there are non-generic complex Enriques surfaces, such as nodal Enriques surfaces, containing up to eight disjoint holomorphic rational $(-2)$--curves. Since $V$ can be taken to be disjoint from them, they contribute many more spheres with normal Euler numbers $-2$ in each $E \setminus \nu V$ summand. Another rational $(-2)$--curve is described in Lemma~\ref{lem:Hitchin}(b).) 
Once again, taking any $S$ that is a disjoint union of these spheres (with signs of their normal Euler numbers and the number of components satisfying the same conditions as before), we once again get an order two subgroup $\langle T_S \rangle$ of $\M(Y_{r,n})$ that does not lift to $\D(Y_{r,n})$ by Theorem~\ref{thm:obstruction1}.

\smallskip
\noindent \underline{\textit{Irreducible $4$--manifolds}}: Among the $4$--manifolds $X_{s,n}$ and $Y_{r,n}$ we built above, there is  one pair of irreducible $4$--manifolds, $Y_{\pm 1,0}$, the Enriques surface $E$ and its mirror. (They are all reducible for $n>0$, and for any other $Y_{r, 0}$ with $r \neq \pm 1$, we don't have access to any geometric tools to examine irreducibility ---however, one can observe that their universal covers have trivial Seiberg-Witten invariants.)

Here is a large family of irreducible examples we can add to our list: Let $X$ be any complex elliptic surface with $t>0$ many $I_2$ fibers, with finite fundamental group of order $m >1$, and holomorphic Euler characteristic $\chi_h=n >0$ such that either $m$ or $n$ is even. Each $I_2$ fiber contains a rational $(-2)$-curve and we can take $S \subset X$ to be the disjoint union of these $t$ embeddes spheres with normal Euler numbers $-2$. We claim that the order two subgroups $\langle T_S \rangle$ of $\M(X)$ does not lift to $\D(X)$. This is because, the universal cover $\widetilde{X}$ is a minimal complex surface with  $$c_1^2(\widetilde{X})=m c_1^2(X)=m\, 0 =0 \ \ \text{ and }$$ $$\chi_h(\widetilde{X})=m \chi_h(X)=mn \, .$$
Therefore, $\widetilde{X}$ is also a complex elliptic surface, and since it is simply-connected and $mn$ is even, it has the same homotopy type as the spin elliptic surface $E(mn)$. So we can once again invoke Theorem~\ref{thm:obstruction1} to obstruct the liftability. Furthermore, we can get examples with both odd and even intersection forms among these. Say $X$ is obtained from a complex $E(n)$ with $t>0$ $I_2$ fibers by a pair of logarithmic $p >1$ transforms. Then, it is known that $\pi_1(X)= \Z_p$ and the intersection form $Q_X$ is even if $n$ is even and $p$ is odd, and $Q_X$ is odd otherwise. Hence we get the promised examples, since all these elliptic surfaces are irreducible.
\end{proof}

\begin{remark}\label{rk:irreducible}
In the spin case,  an analog of Theorem~A holds by Konno's work in \cite{Konno}, and it is easy to see that there are vast families of irreducible spin $4$--manifolds for which Konno's results are applicable; e.g. one can take any spin K\"{a}hler surface with non-zero signature and containing rational $(-2)$--curves. Even more irreducible examples we can point to directly are available in the symplectic category: In \cite{ArabadjiBaykur}, for any prescribed finitely presented group $G$ (including non-K\"{a}hler groups) we have constructed a spin symplectic Lefschetz fibration $X_G$ with $\sigma (X_G) <0$. If needed, adding two more (untwisted) fiber sum summands, one can guarantee the existence of disjoint embedded spheres with normal Euler number $-2$, each given by a pair of matching vanishing cycles. By Taubes,  these spin (thus minimal) symplectic \mbox{$4$--manifolds} are all irreducible. 
\end{remark}

\medskip
\section{Proof of Theorem~B}\label{sec:proofthmb}

One of the main building block we are going to use here is the \textit{Hitchin manifold} $H$, which is a closed oriented Einstein manifold that arises as a quotient of a complex Enriques surface by a free anti-holomorphic involution \cite{Hitchin}.

\begin{lemma} \label{lem:Hitchin}
The Hitchin manifold $H$ satisfies the following:
\begin{enumerate}[\rm{(}a\rm{)}]
\item $H$ is irreducible and has negative-definite intersection form $Q_H \cong 4\langle -1 \rangle$.
\item  $H$ contains an essential projective plane $R$ with normal Euler number $-1$.   
\end{enumerate}
\end{lemma}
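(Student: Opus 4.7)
The plan is to extract both parts from the description $H = E/\iota$ together with the tower of free covers $\K \to E \to H$.

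For part (a), I would first gather the numerical invariants via multiplicativity under the free double cover $p \colon E \to H$: $\chi(H) = \chi(E)/2 = 6$ and $\sigma(H) = \sigma(E)/2 = -4$, while $\pi_1(H)$ is a finite extension of $\Z/2$ by $\Z/2$ with universal cover $\K$. The decisive point is that the anti-holomorphic involution $\iota$ satisfies $\iota^{*}\omega = -\omega$ on a K\"ahler form $\omega$; since $h^{2,0}(E) = 0$, we have $H^{2}(E; \R) = H^{1,1}(E; \R)$ with $\omega$ spanning its positive-definite part. Combined with the transfer isomorphism $H^{*}(H; \Q) \cong H^{*}(E; \Q)^{\iota}$, this forces $b_1(H) = 0$ and $b^{+}(H) = 0$, so $Q_H$ is negative-definite and unimodular of rank $4$; the only such lattice is $4\langle -1\rangle$. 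For irreducibility, I would argue from the universal cover: any smooth splitting $H \cong X_1 \# X_2$ with both $b_2(X_i) > 0$ forces, via Seifert--Van Kampen applied to the finite free product $\pi_1(H) = \pi_1(X_1) * \pi_1(X_2)$, that one $\pi_1$ factor is trivial and the other equals $\pi_1(H)$; lifting the decomposition to the universal cover then yields $\K \cong \widetilde{X_1} \,\#\, 4 \, X_2$ with both summands still having positive $b_2$, contradicting the smooth irreducibility of $\K$.

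For part (b), the strategy is to produce a holomorphic rational $(-2)$--curve $C \cong \CPo$ in $E$ that is set-wise preserved by $\iota$. The restriction $\iota|_C$ is then a free anti-holomorphic involution of $S^2$, hence conjugate to the antipodal map, so $R := C/\iota$ is an embedded real projective plane in $H$. Essentiality is automatic from the construction: the given double cover $E \to H$ restricts to $C \to R$ with $C$ a sphere, which is exactly the equivalent characterization of essentiality recorded just before Proposition~\ref{prop:essential}. The normal Euler number is then read off from the pullback relation $\nu C \cong p^{*}\nu R$ of real rank $2$ bundles: pullback multiplies the twisted Euler class by the degree $2$ of the cover, so $e(\nu R) = e(\nu C)/2 = -1$ from $C \cdot C = -2$.

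The main obstacle is the existence of such an $\iota$-invariant rational $(-2)$--curve. A generic free anti-holomorphic involution on an Enriques surface need not preserve any $(-2)$--curve, so one must select the real structure carefully within the moduli of real Enriques surfaces. This is the input kindly supplied by Degtyarev, as acknowledged in the introduction: one constructs a real Enriques surface whose $\K$ double cover admits a pair of holomorphic $(-2)$--spheres that are interchanged by the Enriques involution and jointly preserved by the lifted real structure, thereby descending to a single $\iota$-invariant $(-2)$--curve $C$ in $E$. With such an example secured, the remaining verifications above are routine.
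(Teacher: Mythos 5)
Your proposal follows essentially the same route as the paper: for (a), the numerical invariants of $H$ from the free cover by the Enriques/K3 surfaces (the paper quotes $b^+(H)=0$, $\sigma(H)=-4$, $\pi_1(H)=\Z_2\oplus\Z_2$ and invokes Donaldson's diagonalization where you invoke the classification of low-rank definite unimodular lattices --- both work), with irreducibility obtained by lifting a hypothetical splitting to the irreducible cover; for (b), an $\iota$--invariant rational $(-2)$--curve $C\subset E$ descending to $R=C/\iota\cong\RP$, with essentiality and the normal Euler number $-1$ read off from the restricted double cover $C\to R$, exactly as in the paper. The one substantive step you do not actually carry out is the existence of a free real Enriques surface admitting such an invariant $(-2)$--curve: you correctly identify this as the main obstacle and the correct shape of its resolution (a pair of disjoint $(-2)$--spheres in the covering K3, interchanged by the Enriques involution and preserved by a lift of the real structure), but you defer entirely to ``Degtyarev's input.'' This existence statement is the technical heart of part (b), and the paper devotes the bulk of the proof to it: a lattice-theoretic analysis of the bi-eigenlattices of the two lifts $c^1,c^2$ of the real structure to the K3 cover produces orthogonal square $(-2)$ vectors $r^{\pm}$ with $c^1\colon r^{\pm}\mapsto -r^{\pm}$ and $\tau\colon r^+\leftrightarrow r^-$, and a generic period orthogonal to these is then chosen so that $r^{\pm}$ are realized by disjoint smooth rational curves. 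Citing the acknowledgements is not a proof of this step. A smaller omission: to know that the specially constructed quotient is diffeomorphic to \emph{the} Hitchin manifold (so that (a) and (b) hold for one and the same $X$ in Theorem~B), the paper uses that the moduli space of real Enriques surfaces with free real structure is connected, hence the diffeomorphism type of the quotient is independent of the choice; your write-up should record this.
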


\begin{proof} (a) The algebraic topology of the closed oriented smooth $4$--manifold $H$ is easily inferred from that of an Enriques surface and is readily described by Hitchin in \cite{Hitchin}: $\pi_1(H)=\Z_2 \oplus \Z_2$, $b^+(H)=0$ and $\sigma(H)=-4$, so $b^-(H)=4$ and by Donaldson's diagonalization theorem, $Q_H \cong 4\langle -1 \rangle$. Since $H$ is finitely covered by an Enriques surface, which is irreducible, $H$ too is irreducible.

\noindent(b) For the existence of an essential projective plane $R$ we are going to turn to real algebraic geometry: A \textit{real Enriques surface} is a complex Enriques surface with an anti-holomorphic involution on it. The deformation classes of real Enriques surfaces are well-understood, and in the case that is of interest to is, namely when the involution is free, the moduli space is connected \cite{DegtyarevEtal}. This means in particular that the diffeomorphism type of $H$ is independent of the complex Enriques surface admitting an anti-holomorphic involution. 

Therefore, to find an essential projective plane $R$ in $H$ it suffices to find a \textit{particular} real Enriques surface $E$ with a free anti-holomorphic involution $c$ on it which fixes a holomorphic rational $(-2)$-curve $C$ in $E$. (In turn, this translates to finding a pair of rational $(-2)$--curves in a non-generic $K3$ surface $K$ covering this $E$.)
Since the normal Euler number of $C$ is $-2$, that of $R$ will be $-1$, and as we noted earlier, because $R$ lifts to a sphere in a double cover, it is essential. 

The following argument for the existence of the desired pair of $E$ and $C$ is kindly provided by Alex Degtyarev, and is written following the techniques, results and notation of \cite{DegtyarevEtal}.

\noindent \underline{\textit{Degtyarev's lattice theoretic construction}}:
Let $K \to E$ be the covering $K3$ surface and let $\tau$ be the deck transformation of this double cover. Let $c^{i} \colon K \to K$, $i=1,2$ denote the two lifts of the real structure $c$ on $E$, so that $\tau=c^1\circ c^2$. We have the lattice $L:=H_2(K)\cong 2 \mathbf{E}_8\oplus 3 \mathbf{U}$. (All root systems are negative-definite.)  A crucial observation is that the desired $E$ is of type $(I_0,I_0)$, i.e.
both halves of $E_\R$ (which are both empty) represent $0\in H_2(E;\Z_2)$.

Let $L^{\epsilon^1,\epsilon^2}\subset L$,
$\epsilon^{i}=\pm$, be the bi-eigenlattices of~$c^{i}$. Then by a computation in 
 \cite{DegtyarevEtal}[Section 21.3] of the Betti numbers and parity, we get
\[*
L^{++}\cong\mathbf{D}_4(2),\qquad L^{+-}\cong L^{-+}\cong L^{--}\cong\mathbf{D}(2)\oplus\mathbf{U}(2).
\]
According to \cite{DegtyarevEtal}[21.3], the lattices $L^{-+}$ and $L^{--}$ are glued together (to
the \linebreak $(-1)$-eigenlattice $L^{-1}$ of $c^1$, which is
$\mathbf{E}_8(2)\oplus\mathbf{U}(2)\oplus\mathbf{U}$) as follows: the minimal gluing (we can
say that $\mathbf{D}_4\oplus\mathbf{D}_4$ extends to $\mathbf{E}_8$) to
$\mathbf{E}_8(2)\oplus\mathbf{U}(2)\oplus\mathbf{U}(2)$ is followed by the index~$2$ extension
via\ $r^+:=\frac12(a^++a^-)$, where $a^\epsilon:=u_1^\epsilon-u_2^\epsilon$ and
$u_1^\epsilon,u_2^\epsilon$ is a standard basis for the $\mathbf{U}(2)$ summand
coming from $L^{-,\epsilon}$. Let $r^-:=\frac12(a^+-a^-)$. Then, both
$r^\pm\in L^{-1}$ and, since $(a^\pm)^2=-4$, we have
\[*
(r^\pm)^2=-2,\quad r^+\cdot r^-=0.
\]

Now, we have two orthogonal square $(-2)$ vectors $r^\pm$ such that
\[
c^1\:r^\pm\mapsto-r^\pm,\qquad \tau\:r^+\leftrightarrow r^-.
\label{eq.r}
\]
To choose the complex structure on~$K$ and thus~$E$, we choose the period (see
\cite{DegtyarevEtal}[Section~14] orthogonal to $a^\pm$ and generic otherwise (i.e. not orthogonal to
an integral $(-2)$-vector other than $\pm r^\pm$). The general theory of
$K3$-surfaces implies that $r^\pm$ (or $-r^\pm$)
are realized by a pair of disjoint $(-2)$--curves $C^\pm\subset X$;
by~\eqref{eq.r}, they are $c^1$-real and transposed by~$\tau$.
Moreover, since the
period is chosen generic, $K$ has no other $(-2)$--curves and, hence, $C^\pm$ are
both smooth rational. They project to a $(-2)$-curve $C$ in~$E$.
\end{proof}

We can now present our examples with definite intersection forms. 

\begin{proof}[Proof of Theorem~B] Consider  $Z_{n}:=  H \# \,n\, \CPb$, for any $n \in \Z^+$, where  $H$ is the Hitchin manifold. By Lemma~\ref{lem:Hitchin}(a), the closed oriented \mbox{$4$--manifold} $Z_{n}$ has a negative-definite intersection form $Q_{Z_{n}} \cong (4+n)\langle -1\rangle$. It moreover contains $n$ disjoint spheres $S_j$ with normal Euler numbers $-1$. 
Let $S$ be a disjoint union of {$k \neq (4+n)/2$} exceptional spheres among $\{S_j\}$, respectively. 

By Theorem~\ref{thm:obstruction2}, the multi-reflection $T_S$ does not lift, and $T_S$ is not isotopic to identity by Proposition~\ref{prop:MT}(b). Therefore the Nielsen realization fails for the  order two subgroup $\langle T_S \rangle$ of $\M(Z_{n})$. Switching the orientation, and invoking the mirror statement in Theorem~\ref{thm:obstruction2}, we get similar examples in $\M(\overline{Z}_{n})$, where now \linebreak $Q_{\overline{Z}_{n}} \cong (4+n)\langle 1 \rangle$.
\end{proof}

\smallskip
\begin{remark}\label{rk:rank4}
By Lemma~\ref{lem:Hitchin}(b), the Hitchin manifold $H$ contains an  essential projective plane $R$, so $\langle T_R \rangle$ is nontrivial in $\M(H)$. While we have not been able to determine whether $T_R$ is of finite order in $H$ or not, we note that, if it does, then by Corollary~\ref{cor:obstruction}, $H$ is an example of an \emph{irreducible} definite $4$--manifold with a finite subgroup in $\M(H)$ that does not lift to $\D(H)$.
\end{remark}

\begin{rmk}
One can leverage multi-twists and multi-reflections to generate many more examples of finite subgroups  of mapping class groups of $4$--manifolds that do not lift.  Assume that the multi-twist or multi-reflection along $S=\sqcup_{i=1}^k S_i$ in $X$ is not liftable.  For any $m \leq k$, taking $G=\langle T_{S_1}, \ldots, T_{S_m}\rangle$ or $G=\langle R_{S_1}, \ldots, R_{S_m}\rangle$, we get a non-liftable subgroup $G \cong \oplus \Z_2^{m}$ of $\M(X)$.  More generally,  assume that there is a finite subgroup $G'<\M(X)$ where each $g \in G'$ has a representative in $\D(X)$ permuting $\{S_i\}$.  Then, there is another finite subgroup $G'' \cong \Z_2^m \x G'$ of $\M(X)$ that does not lift to $\D(X)$.  
\end{rmk}

\medskip
\enlargethispage{0.25in}
\section{Proof of Theorem~C} \label{sec:proofthmc}

In this final section, we show that projective twists along essential Lagrangian projective planes give rise to examples of elements of the symplectic Torelli group that cannot be expressed as a product of squared Dehn twists along Lagrangian spheres.

\begin{proof}[Proof of Theorem~C] Start with a complex elliptic surface $Y=E(1)$ with two nodal fibers.  Symplectically blowing-up at one of the two nodes and then rationally blowing down the resulting $(-4)$-curve we get a new symplectic $4$--manifold $Y'$.  Repeating this  at the other node,  we derive another symplectic $4$--manifold $Y''$.  As observed by Fintushel and Stern \cite[Theorem~3.1]{FSrationalblowdown},  this special construction amounts to performing a logarithmic transform of order two on a regular fiber of $E(1)$ and thus $Y' \cong E(1)_2$ and $Y'' \cong E(1)_{2,2}$.  Note that $\pi_1(Y') \cong 1$ and $\pi_1(Y'') \cong \Z_2$.  

A symplectic rational blowdown along a \mbox{$(-4)$--curve} can be realized by symplectic summing with a quadric $Q$ in $\CP$ \cite{Gompf}.  Since the complement $\CP \setminus \nu Q$ contains the standard Lagrangian $\RP \subset \CP$,  in the  symplectic $4$--manifold obtained after the rational blowdown we have an embedded Lagrangian projective plane with normal Euler number $-1$. Let $R \subset Y''$ be such a Lagrangian projective plane  obtained after rationally blowing down the $(-4)$--curve $C$ in $Y'$.  

We claim that $R$ is essential in $Y''$.  To prove it,  let us assume the opposite.  By Seifert-Van Kampen, we have
	$$\begin{tikzcd}
		\pi_1(\partial (\nu R)) \arrow{r}  \arrow{d}{i_*} & \pi_1(\nu R) \arrow{d}[left]{0}\\ \pi_1(Y''\setminus \nu R)
		  \arrow{r} & \pi_1(Y'') 
	\end{tikzcd}
	$$
and so $ \coker{i_*} \cong \pi_1(Y'') \cong \Z_2$.  On the other hand,  we have
	$$\begin{tikzcd}
		\pi_1(\partial (\nu C)) \arrow{r}  \arrow{d}{j_*} & \pi_1(\nu C)\arrow{d}[left]{0} =0 \\
		\pi_1(Y'\setminus \nu C)
		\arrow{r} & \pi_1(Y')=0
	\end{tikzcd}
	$$
and so $ \coker{j_*}\cong 0$.   However,   $Y''\setminus \nu R=Y' \setminus \nu C$ and $\partial (\nu R) = \partial (\nu C)$,  thus  $i_*=j_*$ could not have different cokernels.  It follows that the inclusion induced homomorphism $\pi_1( \nu R) \to \pi_1(Y'') \cong \Z_2$ is a non-zero map,  and $R$ is essential in $Y''$. 

Therefore,  by Proposition~\ref{prop:essential},  $T_R^m \neq 1$ in $\M(Y'')$ for any odd integer $m$.
	

For more examples,  we can repeat the arguments above verbatim for the complex elliptic surface $Y=E(n)$,   so $Y''=E(n)_{2,2}$ and $\widetilde{Y''}=E(2n)$.  In  fact, $Y'' \cong E(1)_{2,2}$ is diffeomorphic to the Enrique surface $E$, which has symplectic Kodaira dimension zero, and for $n>1$, we get examples with symplectic Kodaira dimension one.  Taking symplectic fiber sum of $Y=E(n)$ along a disjoint copy of the elliptic fiber with a (simply connected) minimal symplectic $4$--manifold of general type,  we get examples with symplectic Kodaira dimension two.  (The latter building blocks are found in abundance; see e.g. \cite{BaykurEtal}.)

Since squared Dehn twists are trivial in the smooth mapping class group,  none of the projective twists above can be expressed as a product of them.
\end{proof}
    
\begin{remark}
In the above examples,  we can moreover observe that $T_R^2  \neq 1$ in $\M(X, \omega)$ as well.   For instance,  the  universal double  cover of $X=E(1)_{2,2}=E$, the  Enrique surface,   is diffeomorphic to the $\K$ surface.  Take the symplectic form $\widetilde{\omega}$· on $\K$ to be the one induced by the symplectic form $\omega$ on $E$ via the double cover.  Then $S:=q^{-1}(R)$ is a Lagrangian sphere in $\K$.  However,  as shown by Seidel in \cite[corollary 2.9]{Seidel2}, neither a Dehn twist nor its square in a closed minimal symplectic $4$--manifold $Z$ with $H^1(Z ; \R)=0$ and $b_2(Z) \geq 3$ can be symplectically isotopic to identity.  Therefore,  $T_R^2 \neq 1$ in $\M(E, \omega)$.  The same argument shows that the order of $T_R$ is not $2$ in the symplectic mapping class group of any other $(X, \omega)$ above.  
\end{remark}

\vspace{0.1in}
\noindent \textit{Acknowledgements.} This work was  supported by the  NSF grant  DMS-2005327. The second author would like to thank Harvard University and  Max Planck Institute for Mathematics in Bonn for their hospitality during the writing of this article. We are grateful to Alex Degtyarev for answering an inquiry of ours on the existence of a  specific real Enriques surface and contributing the argument on its existence,  which is  included in the proof of Lemma~\ref{lem:Hitchin}(b).

\vspace{0.15in}

\end{document}